\theoremstyle{break}\theorembodyfont{\it}
\newtheorem{theo}{Theorem}
\newtheorem{defi}[theo]{Definition}
\newtheorem{lem}[theo]{Lemma}
\newtheorem{prop}[theo]{Proposition}
\newtheorem{rem}[theo]{Remark}
\newenvironment{proof}{\noindent{\bf Proof: }}
                {\leavevmode\unskip\nobreak\hskip2em plus1fill
                $\scriptstyle\square$\vskip\theorempostskipamount\par}
\def\dis{\displaystyle}
\let\lt=<
\let\gt=>
\def\freccia{{\longrightarrow}}
\def\F { \mathcal{F}}
\def\R{{\mathbb R}}
\let\phi=\varphi
\let\eps=\varepsilon
\def\be{\begin{equation}}
\def\ee{\end{equation}}
\def\beq{\begin{eqnarray*}}
\def\eeq{\end{eqnarray*}}
\def\e{{\rm e}}
\def\d{{\rm d}}
\newcommand{\fer}[1]{(\ref{#1})}
\def\ff{\widehat f}
\def\derpar#1#2{\frac{\partial#1}{\partial#2}}
\def\supp{\mathop {supp}}
\begin{document}

\title{The grazing collision limit of the inelastic \\ Kac model around a L\'evy-type equilibrium.
}

\author{G. Furioli \thanks{University of Bergamo, viale
Marconi 5, 24044 Dalmine, Italy. \texttt{giulia.furioli@unibg.it}},
 A. Pulvirenti \thanks{Department\
of Mathematics, University of Pavia, via Ferrata 1, 27100 Pavia,
Italy. \texttt{ada.pulvirenti@unipv.it}}, E. Terraneo
\thanks{Department of Mathematics, University of Milano, via Saldini
50, 20133 Milano, Italy. \texttt{Elide.Terraneo@mat.unimi.it}}, and
G. Toscani\thanks{Department of Mathematics, University of Pavia,
via Ferrata 1, 27100 Pavia, Italy.
\texttt{giuseppe.toscani@unipv.it}}
 }

\maketitle

\begin{center}\small
\parbox{0.85\textwidth}{
\textbf{Abstract.} This paper is devoted to the grazing collision
limit of the inelastic Kac model introduced in \cite{PT04}, when the
equilibrium distribution function is a heavy-tailed L\'evy-type
distribution with infinite variance.  We prove that solutions in an
appropriate domain of attraction of the equilibrium distribution
converge to solutions of a Fokker-Planck equation with a fractional
diffusion operator.
\medskip

\textbf{Keywords.} Dissipative kinetic models, L\'evy-type
distributions, Fractional diffusion equations. }
\end{center}

\section{Introduction}

The inelastic Kac model has been introduced in \cite{PT04}, with the
aim of obtaining a physically consistent one-dimensional dissipative
kinetic model, sufficiently rich to exhibit a variety of steady
states and similarity solutions. This model can be viewed as a
dissipative generalization of the Kac caricature of a Maxwell gas
introduced in the fifties \cite{Kac}. Kac model has been fruitfully
used from that time on, to find explicit rates of convergence
towards the Maxwellian equilibrium \cite{McK, TV}, since its simple
structure (with respect to the full Boltzmann equation) makes
possible to carry out exact computations. The inelastic Kac model
reads
\begin{equation} \label{boltz}
\partial_t f(v,t) = Q_p(f,f)(v,t),\quad v\in\R,\, t\geq 0
\end{equation}
where the right-hand side of \fer{boltz} describes the rate of
change of the density function $f$ due to dissipative collisions,
 \be\label{collint}
 Q_p(f,f)(v,t)= \int_{{\mathbb R}\times
[-\pi/2,\pi/2]} b (\theta) \left [\chi^{-1}
f(v_p^{**},t) f(w_p^{**},t) -f(v,t) f(w,t) \right ]\, {\d\theta}\,\d w.
 \ee
The kernel $b$ is an integrable function taking values in
$[-\pi/2, \pi/2]$, which describes the details of the possible
outcomes in the binary collision. The velocities $(v^{**}_p,
w^{**}_p)$ are the pre collision velocities of the so--called
inverse collision, which results with $(v,w)$ as post collision
velocities. Given $(v,w)$, the post collision velocities $(v^*_p,
w^*_p)$ are defined simply generalizing the Kac rule
 \be\label{colli2}
 v^*_p = v\cos\theta|\cos\theta|^p - w\sin\theta|\sin\theta|^p ,
 \quad
 w^*_p = v\sin\theta|\sin\theta|^p + w\cos\theta|\cos\theta|^p.
  \ee
In (\ref{colli2}) the positive constant $p < +\infty$ measures the
degree of inelasticity. If $p=0$, the binary collision is elastic
and we obtain the classical Kac equation, where the post collisional
velocities are given by a rotation in the $(v,w)$ plane. The factor
$\chi = |\sin\theta|^{2+2p} + |\cos\theta|^{2+2p}$  in \eqref{collint}
%{\bf in the gain term}
appears
%{\bf respectively}
from the Jacobian of the transformation
$\d v_p^{**}\d w_p^{**}$ into $\d v\d w$. The loss of energy in a single binary
collision depends on the choice of the {inelasticity parameter} $p$,
and it is given by
\begin{equation}\label{lost}
(v^*_p)^2 + (w^*_p)^2 = (v^2 + w^2)\left( |\sin\theta|^{2+2p} +
|\cos\theta|^{2+2p}\right).
\end{equation}
The structure of the inelastic Kac equation is similar to the
inelastic Boltzmann equation for a Maxwell gas, and also here mass
is conserved, while energy is non-increasing.

Resorting to Bobylev's argument \cite{Bob}, the dissipative Kac
equation can be fruitfully written in Fourier variables as
\begin{equation} \label{eq}
\partial_t \widehat f(\xi, t)  =
\widehat{Q}_p\left( \ff,\ff \right)(\xi,t) ,
\end{equation}
where $\ff(\xi,t)$ is the Fourier transform of $f(v,t)$ with respect to $v$
\[ \widehat{f}(\xi,t) = \int_{{\mathbb R}} e^{-i \xi v}\, f(v,t)\, \d v, \]
and
\begin{equation}\label{trascoll}
  \widehat{Q}_p\left( \ff,\ff \right)(\xi,t) =
\int_{-\frac \pi 2}^{\frac \pi 2}b (\theta)
 \left [ \widehat{f}(\xi_p^+,t)
\widehat{f}(\xi_p^-,t) - \widehat{f}(\xi,t) \widehat{f}(0,t) \right ]\,{\d\theta}.
\end{equation}
In \fer{trascoll}
\begin{equation}\label{csi}
\xi_p^+ = \xi \cos\theta|\cos\theta|^p , \quad \xi_p^- = \xi
\sin\theta|\sin\theta|^p.
\end{equation}
The Fourier description allows  to verify that the inelastic Kac
model, unlike what happens for the elastic model, possesses
Maxwellian equilibria with infinite energy. Indeed, the collision
operator \fer{trascoll} vanishes by choosing
 \be\label{gen-max}
 \widehat M_p(\xi) = \exp \left\{- \alpha |\xi|^{2/(1+p)} \right\} , \qquad \alpha >0.
 \ee
In case  $0<p\le 1$, the function \fer{gen-max} represents a  L\'evy
symmetric stable distribution of order $2/(1+p)$ \cite{Fe71}.
Consequently, the problem of convergence of the solution to
\fer{boltz}  to these equilibria for large times is deeply connected
with the central limit theorem for stable laws, like the classical
central limit theorem is closely connected with the convergence
towards equilibrium of the elastic Boltzmann or Kac equation
\cite{LR}. The main result in \cite{PT04} is that these Maxwellian
equilibria \fer{gen-max} only attract  solutions corresponding to
initial data which are in a suitable (small) domain of attraction of
\fer{gen-max}. In particular, the energy of these initial data has
to be unbounded.

As a matter of fact, the classical problem of the cooling of the
dissipative gas is concerned with initial densities of finite energy
\cite{Vil06}. In this relevant physical case, the stationary
solution is a Dirac delta function concentrated in $v=0$, which is
nothing but a particular case of equilibrium \fer{gen-max}
corresponding to $\alpha = 0$. This equilibrium is strongly
attractive, and its domain of attraction is given by all initial
densities with bounded energy \cite{Vil06}.

If on the contrary the energy of the initial datum is unbounded, the
collision mechanism given by \fer{colli2} is in general not enough
to cool down the gas towards a density with zero energy. In this
second case however, convergence to  Maxwellian equilibria of type
\fer{gen-max} is possible. Hence, the existence of these equilibria
is strictly linked to the \emph{weakness} of the particular
one-dimensional collision me\-cha\-nism of the dissipative Kac
equation.

The {\it grazing limit} procedure consists in letting the kernel $b
(\theta)$ to concentrate on $\theta =0$  (which implies $v_p^*=v$,
$w_p^* =w$) {in such a way that the contribution of the collision
integral does not vanish}. This can be done by assuming on a family
$\{b_\eps\}$ of collision kernels that satisfy the following
properties \cite{Vil2}

\begin{defi}[The grazing sequence]\label{def1}
 The sequence $\{b_\eps(\theta) \}$ is a
grazing collision sequence if
\begin{enumerate}[(a)]
   \item $b_{\eps}(\theta)= b_{\eps}(|\theta|) \geq 0$; \label{prima}
   \item $\supp\{ b_\eps\} \subset \{\theta \in [-\frac \pi 2,\frac \pi 2]:
   \ 0<c_\eps \leq |\theta| \leq d_\epsilon\}$ where $d_\eps \to 0$ for $\eps\to 0$; \label{seconda}
   \item $\int_0^{\frac \pi 2} b_\eps(\theta)\sin^{2} \theta \, \d \theta =1$ for all $\eps$. \label{terza}
%   \item $\int_0^{\frac \pi 2} b_\eps(\theta)\sin^{2+\eta} \theta \, \d \theta \to 0$ as $\eps \to 0$ for all $\eta >0$.
  \end{enumerate}
\end{defi}

The physical meaning of condition \fer{terza} is related to the fact
that the cooling of the dissipative gas has to be guaranteed in the
limit. By means of \fer{lost}, it follows in fact that the energy
varies in time according to
 \be\label{en}
 \begin{aligned}
\frac \d{\d t} \int_{{\mathbb R}}  v^2 f(v,t)\,\d v &=
 \frac 12 \int_{{\mathbb R}^2}
\int_{-\pi/2}^{\pi/2} b_\eps (\theta)\, \left( (v_p^*)^2
+(w_p^*)^2 -v^2 - w^2 \right)\, f(v,t)f(w,t)\, {\d\theta}\,\d w\, \d v\\
&=- \left(\int_{-\pi/2}^{\pi/2} b_\eps (\theta) \left( 1-
|\sin\theta|^{2+2p} - |\cos\theta|^{2+2p}  \right)\, {\d\theta}\right )
\int_{{\mathbb R}}
 v^2 f(v,t)\,\d v.
\end{aligned}
 \ee
Hence, the grazing limit procedure has to be chosen so that the
(positive) coefficient
 \[
 L_\eps  = \int_{-\pi/2}^{\pi/2} b_\eps (\theta)
 \left(1 -|\sin\theta|^{2+2p} - |\cos\theta|^{2+2p} \right)\,{\d\theta}
 \]
remains bounded in the limit.
It is immediate to conclude that, for $0 < p \le 1$
 \[
1 - |\sin\theta|^{2+2p} - |\cos\theta|^{2+2p} \le 1 -
|\sin\theta|^{4} - |\cos\theta|^{4} = 2\sin^2\theta\cos^2\theta.
 \]
Moreover, a simple analysis of the behavior of the function
 \[
\Phi(y)= 1- y^{1+p} -(1-y)^{1+p}  -c_py(1-y), \qquad 0 \le y \le 1
 \]
shows that $\Phi(y)$ is non-negative at least whenever $c_p \le
p(1+p)2^{1-p}$. Consequently, we have the bounds
 \[
c_p \int_{-\pi/2}^{\pi/2}  b_\eps (\theta)\sin^2\theta
\cos^2\theta\,\d\theta \le L_\eps  \le 2 \int_{-\pi/2}^{\pi/2}
b_\eps (\theta)\sin^2\theta \cos^2 \theta \, \d\theta
 \]
which imply, under condition \fer{terza} of Definition \ref{def1},
that the decay of energy remains well-defined in the limit
procedure.

 In the case of the
elastic Kac equation, the grazing collision asymptotic has been
studied in \cite{to1}, resorting to the non cut-off formulation by
Desvillettes \cite{De}, in which the kernel $b $ is assumed to be
non integrable. For a large class of initial densities, essentially
all initial densities with finite energy, the solution to the
kinetic model has been proven in \cite{to1} to converge towards the
solution to the linear Fokker-Planck equation
 \be\label{FP}
\derpar{f}{t} = \frac{\partial^2 f }{\partial v^2} + \frac{\partial
}{\partial v}\left(vf\right).
 \ee
In this paper we will address a similar problem, namely the grazing
collision asymptotic of the inelastic Kac model, in the case in
which the kernel $b $ is an integrable function, the initial data
possess infinite energy and lie in a suitable neighborhood of a L\'evy
distribution of type \fer{gen-max}.

Our main result (Theorem \ref{teo1}) consists in proving that, in the inelastic regime
cha\-rac\-terized by $0< p \le 1$,  the solution to the inelastic Kac
model, corresponding to this choice of initial values, converges
towards the solution of a Fokker-Planck equation with a fractional
diffusion
  \be\label{FPF}
\derpar{f}{t} = 2\left( \alpha\, D^q f + \frac 1 q
 \frac{\partial }{\partial v}\left( vf\right)\right) , \qquad
q = \frac 2{1+p},
 \ee
where the fractional derivative of order $q$ is defined for instance
by the Fourier formula
 \be\label{fder}
D^q f  = \F^{-1}\left( -|\xi|^q \ff(\xi)\right),
 \ee
and $\F^{-1}$ stands for the inverse Fourier transform.

Fokker-Planck type equations with fractional diffusion appear in
many physical contexts \cite{Cha,Ben, Shu, MFL}, and have been
intensively studied both from the modeling and the qualitative point
of view. Likewise, L\'evy distributions have been applied to the
description of many physical processes, including turbulent flows
\cite{Shl}, diffusion in complex systems \cite{Ott}, chaotic
dynamics of classical conservative systems \cite{SZK93, KZB}, and
others.

Nevertheless, the connections between fractional diffusion equations
and kinetic models of Boltzmann type have been analyzed only
recently. In the framework of kinetic theory and asymptotic limits,
some insight on this connection has been recently done by Mellet,
Mischler and Mouhot \cite{Mel}. The starting point of their analysis
was the derivation of diffusion--type equations from a linear
Boltzmann equation describing the interactions of the particles with
the surrounding medium. Despite the classical problem, in which the
Maxwellian distribution of the background decays exponentially fast
at infinity, in the case described in \cite{Mel}, the Maxwellian
distribution is heavy-tailed, and the resulting diffusion limit
corresponds to a fractional diffusion.

%%%%%%%%%%%%%%%%%%%%%%%%%%%%%%%%%%%%%%%%%%%%%%%%%%%%%%%%%%%%%%%%%%%%%%%%%%%%%%%%%%%%%%%
\section{Preliminary results}

In this short section we resume the main results relative to the
dissipative Kac equation \fer{boltz}. The largest part of these
results have been obtained in \cite{PT04}, where the kernel
$b$, like in the original Kac model \cite{Kac}, was
assumed constant.  The analysis of \cite{PT04}, however, can be
easily extended to integrable kernels.

\begin{theo}[Existence, uniqueness and conservation laws]
Let $0<p\leq 1$. Let the initial datum $f_0 \geq 0$ satisfy the
(normalization) assumptions
\be\label{norm}
 \int_{\R} f_0(v) \, \d v =1,\quad \int_{\R}   v
\,f_0(v)\,\d v=0.
 \ee
 Then, the initial value problem for the dissipative Kac equation
\fer{boltz}--\fer{collint}, with $b\in L^1 ([- \pi/ 2,
\pi/2])$, has a unique non negative solution $f \in
C^1\left([0,\infty), L^1(\R)\right)$. This solution satisfies for
all $t>0$:
\[
\int_{\R} f(v, t) \, \d v =1,\quad \int_{\R} v\,f(v,t)\, \d v=0.
\]
\end{theo}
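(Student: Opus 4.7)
The plan is to adapt the standard Banach fixed-point argument for the cutoff Boltzmann equation with Maxwellian molecules to the present dissipative setting. I split the collision operator as $Q_p(f,f)=Q_p^+(f,f)-Q_p^-(f,f)$, where the loss piece is simply
\[
Q_p^-(f,f)(v,t)=\|b\|_{L^1}\,f(v,t)\int_{\R}f(w,t)\,\d w.
\]
The basic estimate required is the bilinear bound $\|Q_p^+(f,g)\|_{L^1}\leq \|b\|_{L^1}\|f\|_{L^1}\|g\|_{L^1}$. This is immediate from the Jacobian identity mentioned below \fer{collint}, namely $\d v\,\d w=\chi\,\d v_p^{**}\,\d w_p^{**}$: after Fubini, for each fixed $\theta$ the change of variables $(v,w)\mapsto(v_p^{**},w_p^{**})$ cancels the factor $\chi^{-1}$ in $Q_p^+$ and the remaining integration factorises.

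Next I recast the problem in Duhamel form,
\[
f(t)=e^{-\lambda t}f_0+\int_0^t e^{-\lambda(t-s)}\bigl[Q_p^+(f,f)(s)+\lambda f(s)-Q_p^-(f,f)(s)\bigr]\,\d s,
\]
with $\lambda=2\|b\|_{L^1}$ chosen so that the bracketed expression is nonnegative on the invariant ball $\{f\geq 0,\ \|f\|_{L^1}\leq 2\}$. A Banach fixed-point argument in $C([0,T];L^1(\R))$ for $T$ small provides a unique local mild solution, and the Picard iteration initialised at $f_0\geq 0$ keeps every iterate nonnegative, hence so does the limit. Integrating in $v$ and invoking the same Jacobian identity with test function $\phi\equiv 1$ yields $\int Q_p(f,f)\,\d v=0$, so $\|f(t)\|_{L^1}=1$ throughout the existence interval; this uniform $L^1$ control lets the local argument be iterated over successive time windows and produces a global solution in $C^1([0,\infty);L^1(\R))$.

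For the vanishing of the first moment I pass to the Fourier picture \fer{eq}--\fer{trascoll}, differentiate in $\xi$ and evaluate at $\xi=0$. Using $\widehat f(0,t)=1$ and the chain rule on $\xi_p^{\pm}$, the computation reduces to the closed linear ODE
\[
\partial_t\partial_\xi\widehat f(0,t)=\Bigl(\int_{-\pi/2}^{\pi/2}b(\theta)\bigl(\cos\theta|\cos\theta|^p+\sin\theta|\sin\theta|^p-1\bigr)\,\d\theta\Bigr)\,\partial_\xi\widehat f(0,t);
\]
since $\partial_\xi\widehat f(0,0)=-i\int_{\R} v\,f_0(v)\,\d v=0$ by \fer{norm}, this forces $\partial_\xi\widehat f(0,t)\equiv 0$, i.e.\ the first moment of $f(\cdot,t)$ vanishes for all $t>0$. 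The main technical point to verify along the way is that $(v,w)\mapsto(v_p^{**},w_p^{**})$ is a global linear isomorphism of $\R^2$ for each $\theta\in[-\pi/2,\pi/2]\setminus\{0\}$ (guaranteed by $\chi>0$) and that the bounds remain uniform in $\theta$ so that Fubini applies; everything else is routine Kac-equation technology, the only real departure from \cite{PT04} being that the integrability of $b$ replaces the constancy of the original kernel there.
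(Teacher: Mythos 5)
Your overall strategy is exactly the one the paper points to: the paper offers no written proof of this theorem, only the remark that it ``follows along the same lines'' of the Morgenstern argument for the conservative Kac equation \cite{Mor1}, \cite{Mor2}, i.e.\ a gain/loss splitting, the bilinear $L^1$ bound for $Q_p^+$ via the Jacobian $\d v\,\d w=\chi\,\d v_p^{**}\,\d w_p^{**}$, a Duhamel/Picard iteration preserving nonnegativity, and globalization through mass conservation. That part of your proposal is sound and is essentially the intended proof, modulo the routine bookkeeping of checking that the Picard iterates stay in the invariant ball on a short time interval.

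There is, however, one genuine gap: your treatment of the first moment. The paper states explicitly, right after the theorem, that the initial data are \emph{not} assumed to satisfy $\int_\R |v|f_0(v)\,\d v<+\infty$, so that $\int_\R v f_0(v)\,\d v=0$ in \fer{norm} must be read as a principal value. This is not a pedantic caveat: the equilibria \fer{gen-max} are stable laws of order $2/(1+p)\in[1,2)$, and for $p=1$ (Cauchy-type tails) the first absolute moment diverges, so the relevant initial data genuinely fall outside the absolutely integrable case. In that situation $\widehat f(\cdot,t)$ need not be differentiable at $\xi=0$, and your closed ODE for $\partial_\xi\widehat f(0,t)$ is not even well defined; the identity $\partial_\xi\widehat f(0,t)=-i\int v f(v,t)\,\d v$ that you invoke presupposes exactly the integrability you do not have. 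To repair this you must propagate the principal-value condition directly, e.g.\ by showing that each Wild/Picard iterate inherits it from $f_0$ (the collision rule \fer{colli2} is linear and odd in $(v,w)$, so the odd part of the density evolves autonomously and vanishes if it vanishes initially, in the appropriate symmetrized sense), or by restricting to even data, for which the principal value is trivially zero -- which is in fact the setting the paper adopts for all of its subsequent results. Your Fourier ODE argument is fine as written only under the additional hypothesis $\int|v|f_0\,\d v<+\infty$.
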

In this existence result, the initial data are not supposed to satisfy
$\int_\R |v| f_0(v)\, \d v <+\infty$ and so $\int_\R vf_0(v)\, \d v =0$ has to be
interpreted as a principal value integral.
The proof of this theorem follows along the same lines of the proof
for the conservative Kac equation, which goes back to Morgenstern
\cite{Mor1}, \cite{Mor2}.

Let us now consider the Fourier formulation
\fer{eq}--\fer{trascoll}, and let us suppose that the kernel $b$ is
an even function. Then, Equations \fer{eq}--\fer{trascoll} can be
written as
 \be\label{four-sym}
\partial_t \widehat f (\xi, t)=  \int_0^{\frac \pi 2} b(\theta) \left( \widehat f(\xi \cos^{p+1} \theta,t)
 \left(\widehat f (\xi \sin^{p+1} \theta,t)+ \widehat f (-\xi \sin^{p+1} \theta,t)\right ) -2\widehat f (\xi,t)\right )\, \d \theta.
 \ee
 Due to the integrability of the collision kernel, we can split
the collision integral to obtain
\[
\partial_t \widehat f (\xi, t)=  \int_0^{\frac \pi 2} b(\theta)
\widehat f(\xi \cos^{p+1} \theta,t) \left(\widehat f (\xi \sin^{p+1} \theta,t)+
\widehat f (-\xi \sin^{p+1} \theta,t\right )\,  \d \theta
-{\sigma } \widehat f (\xi,t)
\]
where
\[
{\sigma } = \int_{-\frac \pi 2}^{\frac \pi 2} b(\theta)\, \d
\theta.
\]
The (unique) solution $\widehat f(t)$ of Equation \fer{four-sym} can be
explicitly written using the so-called Wild expansion, found by Wild in \cite{wil} and exploited
extensively since then by Bobylev (\cite{Bob}) and many others.
It reads
\be\label{wild}
\widehat f (\xi, t)= \e^{-{\sigma } t} \sum_{n=0}^\infty
\varphi_n(\xi) (1-e^{-{\sigma } t})^n,
\ee
where
\[
\begin{aligned}
&\varphi_0(\xi) = \widehat f_0(\xi),\\
&\varphi_{n+1}(\xi) = \frac 1{n+1} \sum_{j=0}^n  Q_+(\varphi_j,
\varphi_{n-j})(\xi)
\end{aligned}
\]
and
\begin{multline*}
Q_+(\phi,\psi)(\xi) = \frac 1{2{\sigma }} \int_0^{\frac \pi 2}
b(\theta) \left[
\phi(\xi \cos^{p+1} \theta)\left( \psi(\xi \sin^{p+1} \theta) +
\psi(-\xi \sin^{p+1} \theta)\right )  \right .\\
\left.+ \psi(\xi \cos^{p+1} \theta)\left( \phi(\xi \sin^{p+1}
\theta) + \phi(-\xi \sin^{p+1} \theta)\right ) \right ]\, \d \theta.
\end{multline*}
Wild expansion follows from an iteration formula based on the
solution written by Duhamel's formula
\[
\widehat f(\xi, t)= e^{-{\sigma } t} \widehat f_0(\xi) + {\sigma }
\int_0^t e^{-{\sigma } (t-s)} Q_+(\widehat f, \widehat f)(\xi, s)\, \d
s.
\]
For any initial
density $\widehat f_0$ the solution $\widehat f(t)$ is the limit of the sequence
\be\label{succ-duhamel}
  \widehat f^{(n)}(\xi, t)=
   e^{-{\sigma} t} \widehat f_0(\xi) + \sigma
\int_0^t e^{-{\sigma} (t-s)} Q_+(\widehat f^{(n-1)}, \widehat f^{(n-1)})(\xi, s)\, \d \theta\, \d s
\ee
with $\widehat f^{(0)}(\xi,t)=\widehat f_0(\xi)$.
It should be noticed however that the term $\widehat f^{(n)}$ does not correspond in general to a partial sum of \eqref{wild}.
We will exploit in this paper the representation through Wild sums since it  better suits our goals.

In what follows,  we will assume that the initial data $f_0$ are
even functions.
Therefore,  the solution itself is even together with its
Fourier transform and Equation \eqref{four-sym} reads
\be
 \label{four-even}
\partial_t \widehat f(\xi, t)= 2\int_{0}^{\frac \pi 2} b(\theta)
\left( \widehat f(\xi \cos^{p+1} \theta ,t) \widehat f(\xi \sin^{p+1} \theta,t) -\widehat f(\xi,t)\right )\, \d \theta.
 \ee
This assumption allows to simplify many details of the forthcoming
proofs. It has to be noted, however, that the results continue to
hold with minor modifications for general initial data which have
first momentum equal to zero, at the price of an increasing number
of computations.

A further result in \cite{PT04} is concerned with the large time
behavior of solutions.

\begin{theo}[\cite{PT04}]\label{PT}
Let $0<p\leq 1$ and let $f(t)$ be the unique solution of the
dissipative Kac equation \fer{boltz}--\fer{collint}, corresponding
to the initial density $f_0$ satisfying the normalization conditions
\fer{norm}, and such that, for some $0<\delta\leq {2p}/(p+1)$
\be\label{mom-delta} \int_\R |v|^{\frac 2{p+1}+\delta}
|f_0(v)-M_p(v)|\, \d v<+\infty. \ee Then,
\[
\lim_{t\to +\infty} \sup_{\xi \neq 0} \frac {|\widehat f(\xi, t)-\widehat
M_p(\xi)|} {|\xi |^{\frac 2{p+1}+\delta} }=0.
\]
\end{theo}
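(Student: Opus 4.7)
The plan is to measure the distance to equilibrium in Fourier variables via
\[
U(t):=\sup_{\xi\neq 0}\frac{|\widehat f(\xi,t)-\widehat M_p(\xi)|}{|\xi|^s},\qquad s:=\frac{2}{p+1}+\delta,
\]
and to derive a linear Volterra inequality for $U$ that forces it to decay exponentially to zero.

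First I would check that $U(0)<+\infty$. Since $0<\delta\le 2p/(p+1)$, we have $s\in(2/(p+1),2]$; moreover both $f_0$ and $M_p$ are probability densities with zero mean (the latter because $M_p$ is even), so
\[
\widehat f_0(\xi)-\widehat M_p(\xi)=\int_\R \bigl(e^{-i\xi v}-1+i\xi v\bigr)\bigl(f_0(v)-M_p(v)\bigr)\,\d v.
\]
Combined with the elementary estimate $|e^{-i\xi v}-1+i\xi v|\le C_s|\xi v|^s$ valid for $s\in[1,2]$, the moment assumption \fer{mom-delta} yields $U(0)\le C_s\int_\R |v|^s|f_0-M_p|\,\d v<+\infty$.

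Writing $h(\xi,t):=\widehat f(\xi,t)-\widehat M_p(\xi)$ and using that $\widehat M_p$ is a stationary solution of \fer{four-even}, Duhamel's formula gives
\[
h(\xi,t)=e^{-\sigma t}h(\xi,0)+2\int_0^t e^{-\sigma(t-\tau)}\int_0^{\pi/2}b(\theta)\bigl[\widehat f(\xi_p^+,\tau)\widehat f(\xi_p^-,\tau)-\widehat M_p(\xi_p^+)\widehat M_p(\xi_p^-)\bigr]\d\theta\,\d\tau.
\]
The bilinear difference factors as $h(\xi_p^+,\tau)\widehat f(\xi_p^-,\tau)+\widehat M_p(\xi_p^+)h(\xi_p^-,\tau)$, which, since $|\widehat f|,|\widehat M_p|\le 1$, is bounded in modulus by $|h(\xi_p^+,\tau)|+|h(\xi_p^-,\tau)|$. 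Using the homogeneities $|\xi_p^+|=|\xi||\cos\theta|^{p+1}$ and $|\xi_p^-|=|\xi||\sin\theta|^{p+1}$, dividing by $|\xi|^s$ and taking $\sup_\xi$ yields the scalar Volterra inequality
\[
U(t)\le e^{-\sigma t}U(0)+K\int_0^t e^{-\sigma(t-\tau)}U(\tau)\,\d\tau,\qquad K:=2\int_0^{\pi/2}b(\theta)\bigl(|\cos\theta|^{s(p+1)}+|\sin\theta|^{s(p+1)}\bigr)\d\theta.
\]
Gronwall's lemma then gives $U(t)\le U(0)\,e^{(K-\sigma)t}$.

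The proof is completed by observing that $s(p+1)=2+\delta(p+1)>2$, so the strict power inequality $|\cos\theta|^{s(p+1)}+|\sin\theta|^{s(p+1)}<1$ holds for every $\theta\in(0,\pi/2)$, forcing $K-\sigma<0$ and hence $U(t)\to 0$. The delicate point is the initial estimate: it is precisely the restriction $\delta\le 2p/(p+1)$, equivalently $s\le 2$, that makes the $O(|\xi v|^s)$ Taylor remainder admissible — going beyond $s=2$ would require matching a second moment, which does not exist for the heavy-tailed equilibrium $\widehat M_p$. The remainder is a standard Bobylev--Toscani contraction scheme, with the strict contraction rate furnished by the power inequality above.
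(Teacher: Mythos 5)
Your argument is correct and coincides with the standard proof of this result from \cite{PT04}, which the present paper quotes without reproducing: one first bounds $\sup_{\xi\neq 0}|\widehat f_0(\xi)-\widehat M_p(\xi)|/|\xi|^{s}$ with $s=\tfrac 2{p+1}+\delta$ via the moment condition (exactly the reduction indicated in the Remark following the theorem) and then runs the Bobylev--Toscani contraction argument in the Fourier-based metric of order $s$, the strict contraction constant coming from $\cos^{s(p+1)}\theta+\sin^{s(p+1)}\theta<1$ for $s(p+1)>2$. The only point you leave implicit is the a priori finiteness of $U(\tau)$ on bounded time intervals needed to justify Gronwall, which follows at once from the Wild expansion set up in the paper, since each iterate satisfies $\sup_{\xi\neq0}|\varphi_n(\xi)-\widehat M_p(\xi)|/|\xi|^{s}\leq U(0)$.
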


\begin{rem}
Condition \eqref{mom-delta}, which is a condition on moments of the
initial data, can be replaced in the proof of Theorem \ref{PT} by
the weaker condition
\[
\sup_{\xi \neq 0} \frac {|\widehat f_0(\xi)-\widehat M_p(\xi)|}
{|\xi |^{\frac 2{p+1}+\delta} } <+\infty .
\]
\end{rem}

%%%%%%%%%%%%%%%%%%%%%%%%%%%%%%%%%%%%%%%%%%%%%%%%%%%%%%%%%%%%%%%%%%%%%%%%%%%%%%%%%%%%%%%%%%%%%%%%%%%%%%%%%%%%%%%%%%%%%%%%%%%%%
%%%%%%%%%%%%%%%%%%%%%%%%%%%%%%%%%%%%%%%%%%%%%%%%%%%%%%%%%%%%%%%%%%%%%%%%%%%%%%%%%%%%%%%%%%%%%%%%%
\section{Convergence to the solution of the Fokker--Planck equation}\label{conver}

In this section we state and prove our main result on the grazing
collision limit of solutions of the dissipative Kac equation
\eqref{boltz}--\eqref{collint} towards the solution of a
Fokker-Planck equation with fractional diffusion \eqref{FPF}.

In order to show why the Fokker-Planck equation \eqref{FPF} is the
result of the grazing procedure, the following  computation on the
steady state $\widehat M_p(\xi) = \exp \left\{- \alpha
|\xi|^{2/(1+p)} \right\}$ with $\alpha >0$ will be useful.

 Since
$\widehat M_p(\xi)$ satisfies the Kac equation \fer{eq}
\[
\partial_t \widehat M_p(\xi)= 2\int_{0}^{\frac \pi 2} b_\eps(\theta)
\left( \widehat M_p(\xi \cos^{p+1} \theta) \widehat M_p(\xi
\sin^{p+1} \theta) -\widehat M_p(\xi)\right )\, \d \theta.
\]
Let us  expand $\widehat M_p$ around the origin and around the point
$\xi$ respectively. Then
\[
\begin{aligned}
&\widehat M_p(\xi \sin^{p+1} \theta)  = 1- \alpha |\xi|^{2/(p+1)} \sin^2 \theta + o (|\xi|^{2/(p+1)} \sin^2 \theta), \\
& \widehat M_p(\xi \cos^{p+1} \theta)  = \widehat M_p(\xi) +
\partial_\xi \widehat M_p(\xi) \xi(\cos^{p+1}\theta-1)  + o (\xi
(\cos^{p+1}\theta-1)).
\end{aligned}
\]
Using these expressions we obtain
\[
\partial_t \widehat M_p(\xi) =
2\int_{0}^{\frac \pi 2} b_\eps(\theta) \left(- \alpha |\xi|^{2/(p+1)}
\widehat M_p(\xi) \sin^2 \theta + \partial_\xi \widehat M_p(\xi) \xi(\cos^{p+1}\theta-1) + R(\theta, \xi)
\right )\, \d \theta.
\]
The reminder term $R(\theta, \xi)$ behaves like $\sin^{2+\mu}
\theta$ for $\mu >0$ when $\theta \to 0$. Now, the grazing
conditions of Definition \ref{def1} imply $\int b_\eps (\theta)
\sin^{2+\mu} \theta \, \d \theta \to 0$ as $\eps \to 0$ and passing
to the limit in $\eps$ we get (at least in a formal way)
\[
\partial_t \widehat M_p(\xi) = - 2\alpha |\xi|^{2/(p+1)} \widehat M_p(\xi) - (p+1) \xi \partial_\xi \widehat
M_p(\xi),
\]
which is exactly the Fokker-Planck equation \eqref{FPF} in the Fourier variable.
Our goal is to make this computation rigourous for any solution with initial data suitably close to the steady state $M_p$.

 It is easy to see, as it was done in \cite{CT1} for the classical Fokker--Planck equation, that the solution $f(t)$ of Equation
 \eqref{FPF} has an explicit expression in terms of a convolution
 between the initial data and the stationary state. In the Fourier
 variable this solution reads
\be\label{sol-FP-four}
  \widehat f(\xi, t)= \widehat f_0\left( \xi e^{-(p+1)t}\right ) e^{-\alpha |\xi|^{\frac
  2{p+1}}(1-e^{-2t})}.
 \ee
In the physical space \be\label{solFP}
 f(v,t)= \frac 1{\beta(t)} f_0\left( \frac \cdot {\beta (t)}\right ) \ast \frac 1{\gamma(t)} M_p \left( \frac \cdot {\gamma(t)}\right
 )(v),
\ee with
\[
 \beta (t)= e^{-(p+1)t}, \qquad \gamma(t)= (1-e^{-2t})^{\frac
{p+1}2}.
\]
Note that
 \[
 \beta(t)^{2/(p+1)} +  \gamma(t)^{2/(p+1)} = 1
 \]

\begin{theo} \label{teo1}
Assume $\{b_\eps(\theta)\}_{\eps >0} \subseteq  L^1([-\frac \pi
2,\frac \pi 2])$ be a family of collision kernels satisfying
Definition \ref{def1}. Let
 $0<p\leq 1$ and let
 $f_\eps(t) \in C^1([0,+\infty), L^1(\R))$ be the solutions of the dissipative Kac equations
\[
\partial_t \widehat f_\eps(\xi, t)= 2\int_{0}^{\frac \pi 2} b_\eps(\theta) \left( \widehat f_\eps(\xi \cos^{p+1} \theta ,t) \widehat f_\eps(\xi \sin^{p+1} \theta,t) -\widehat f_\eps(\xi,t)\right )\, \d \theta  \label{Beps}\\
\]
corresponding to an even initial density $f_0\geq 0$ satisfying the
normalization condition $\int f_0(v)\, \d v=1$.

Let us suppose in addition that the initial datum $f_0$ satisfies
the conditions
\begin{enumerate}[A)]
\item there is  $\alpha >0$ such that
 \[
\lim_{\xi \to 0^+} \frac{1-\widehat f_0(\xi)}{\xi^{\frac
2{p+1}}}=\alpha;
  \]
\item $\widehat f_0$ is differentiable outside the origin and  the function ${\partial_\xi\widehat f_0(\xi)}/{\xi^{\frac{1-p}{p+1}}}$
is uniformly $\delta$--H{\"o}lder continuous on bounded  subsets of $(0,+\infty)$, namely
there is $\delta \in (0,1)$ and for all $R>0$ there is  $K(R)> 0$
such that
\[
\sup_{0<\xi\leq R,\ 0<\tau\leq R,\ \xi \neq \tau} \frac{ \left|
\frac{\partial_\xi\widehat f_0(\xi)}{\xi^\frac{1-p}{p+1}} -
\frac{\partial_\xi\widehat f_0(\tau)}{\tau^\frac{1-p}{p+1}} \right|}
{|\xi -\tau|^{\delta}} \leq K(R).
\]
\end{enumerate}
Then, if $f (t)$  is  the solution \eqref{sol-FP-four} of  the
Fokker--Planck equation
\[
\partial_t \widehat f (\xi, t)= -2\alpha |\xi|^{\frac 2{p+1}}  \widehat f(\xi, t)- (p+1) \xi \partial_\xi \widehat f(\xi,
t),
\]
corresponding to the same initial datum $f_0$,
\[
 \lim_{\eps \to 0}\sup_{t\geq 0,\ \xi \neq 0} \frac{\left|\widehat f_\eps(\xi, t)-\widehat f(\xi, t)\right |}{|\xi|^{\frac 2{p+1}}}
 =0 \, .
\]

\end{theo}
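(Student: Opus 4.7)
The plan is to work in the weighted distance
\[
\|g\|_q := \sup_{\xi\neq 0} \frac{|g(\xi)|}{|\xi|^q}, \quad q := \frac{2}{p+1},
\]
for which $\|\widehat f_0 - \widehat M_p\|_q < +\infty$: hypothesis A) controls the ratio near $0$, while $|\widehat f_0|,|\widehat M_p|\leq 1$ handles $|\xi|$ bounded away from $0$ (where $|\xi|^{-q}$ is itself bounded). Set $D_\eps(\xi,t) := \widehat f_\eps(\xi,t) - \widehat f(\xi,t)$, so that $D_\eps(\cdot,0)\equiv 0$. The strategy is to split the time axis at some $T_0$ to be chosen later: on $[0,T_0]$ I would run a Duhamel--Gronwall argument in $\|\cdot\|_q$, and on $[T_0,+\infty)$ a tail estimate through $\widehat M_p$ combining the explicit decay of the Fokker--Planck solution with the non-expansivity of the Kac flow in $\|\cdot\|_q$.

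On $[0,T_0]$ the core object is the \emph{consistency error}
\begin{align*}
E_\eps(\xi,t) := & \; 2\int_0^{\pi/2} b_\eps(\theta)\bigl[\widehat f(\xi_+,t)\widehat f(\xi_-,t) - \widehat f(\xi,t)\bigr]\d\theta \\
& + 2\alpha|\xi|^q\widehat f(\xi,t) + (p+1)\xi\partial_\xi \widehat f(\xi,t),
\end{align*}
measuring by how much $\widehat f$ fails to solve the $\eps$-Kac equation. Making the formal computation sketched just before the statement rigorous, I would expand $\widehat f(\xi_-,t) = 1 - \alpha|\xi_-|^q + o(|\xi_-|^q)$ around $\xi_- = 0$ (uniformly in $t\in[0,T_0]$ by A) and the identity $|\xi e^{-(p+1)t}|^q = |\xi|^q e^{-2t}$ built into \fer{sol-FP-four}) and $\widehat f(\xi_+,t) = \widehat f(\xi,t) + \partial_\xi\widehat f(\xi,t)\,\xi(\cos^{p+1}\theta-1) + R(\xi,\theta,t)$ around $\xi$, with $R$ controlled by the H\"older bound on $\partial_\xi \widehat f(\cdot,t)/\xi^{(1-p)/(p+1)}$ propagated from B). The identities $|\xi_-|^q = |\xi|^q\sin^2\theta$, $\cos^{p+1}\theta - 1 = -\tfrac{p+1}{2}\sin^2\theta + O(\sin^4\theta)$, together with condition (c) of Definition \ref{def1}, make the two leading contributions cancel exactly against the Fokker--Planck right-hand side, leaving residues of order $\int_0^{\pi/2} b_\eps(\theta)\sin^{2+\mu}\theta\,\d\theta$ for some $\mu>0$, which vanish by (b)--(c) of Definition \ref{def1}. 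Combined with $|\widehat f|\leq 1$ for $|\xi|$ away from $0$, this yields $\eta_\eps(T_0) := \sup_{t\in[0,T_0]}\|E_\eps(\cdot,t)\|_q \to 0$ as $\eps\to 0$.

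The difference $D_\eps$ then satisfies
\[
\partial_t D_\eps + \sigma_\eps D_\eps = 2\int_0^{\pi/2} b_\eps(\theta)\bigl[\widehat f_\eps(\xi_-) D_\eps(\xi_+) + \widehat f(\xi_+) D_\eps(\xi_-)\bigr]\d\theta + E_\eps,
\]
with $\sigma_\eps := \int_{-\pi/2}^{\pi/2} b_\eps\,\d\theta$. Using $|\widehat f|,|\widehat f_\eps|\leq 1$ and $|\xi_\pm|^q/|\xi|^q\in\{\sin^2\theta,\cos^2\theta\}$ the bilinear term is bounded in $\|\cdot\|_q$ by $\sigma_\eps\|D_\eps(s)\|_q$, so Duhamel together with the explicit supersolution $\eta_\eps(T_0)(t+\sigma_\eps^{-1})$ of the resulting convolution inequality gives $\sup_{t\in[0,T_0]}\|D_\eps(t)\|_q \leq \eta_\eps(T_0)(T_0+\sigma_\eps^{-1}) \to 0$. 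The same Duhamel estimate applied to $\widehat f_\eps - \widehat M_p$ (whose error term vanishes since $\widehat M_p$ is a stationary Kac solution) yields the non-expansivity $\|\widehat f_\eps(t)-\widehat M_p\|_q \leq \|\widehat f_\eps(s)-\widehat M_p\|_q$ for $t\geq s$, while the product form \fer{sol-FP-four} combined with $q(p+1) = 2$ gives directly $\|\widehat f(t)-\widehat M_p\|_q \leq e^{-2t}\|\widehat f_0-\widehat M_p\|_q$. Inserting $\widehat M_p$ via the triangle inequality, for $t\geq T_0$,
\[
\|D_\eps(t)\|_q \leq \|D_\eps(T_0)\|_q + 2e^{-2T_0}\|\widehat f_0-\widehat M_p\|_q,
\]
and the conclusion follows by first choosing $T_0$ large and then $\eps$ small.

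The main obstacle is the consistency estimate on $E_\eps$: the little-$o$ remainders coming from A) and the Taylor remainder coming from B) must be turned into a genuine $o(|\xi|^q)$ bound that is uniform \emph{both} in $\xi\in\R\setminus\{0\}$ and $t\in[0,T_0]$. For small $|\xi|$ this is the content of the two expansions, but for $|\xi|$ away from $0$ (where the Taylor bounds lose force) one must combine the a priori bound $|\widehat f|\leq 1$ with the factor $|\xi|^{-q}$ and exploit the shrinking support of $b_\eps$ to absorb the $\sigma_\eps$-dependent surviving terms.
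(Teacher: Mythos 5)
Your architecture is sound but genuinely different from the paper's, so let me first compare. The paper measures the consistency error the other way round: it computes by how much the Kac solution $\widehat f_\eps$ fails to solve the Fokker--Planck equation, writes the result as a damped transport equation $\partial_t y_\eps+(p+1)\xi\partial_\xi y_\eps=-2(\alpha\xi^{2/(p+1)}+1)y_\eps+R_\eps$ for $y_\eps=(\widehat f_\eps-\widehat f)/\xi^{2/(p+1)}$, and integrates along characteristics; the damping coefficient $-2$ (produced by the drift acting on the weight, since $q(p+1)=2$) gives $\sup_{t\ge 0}|y_\eps|\le\frac12\sup|R_\eps|$ in one shot, with no splitting of the time axis. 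The price is that $R_\eps$ involves $\widehat f_\eps$, so conditions A) and B) must be propagated along the Kac flow uniformly in $t$ and $\eps$; that is the paper's Lemma \ref{Propag}, proved by recursion through the Wild expansion, and it is the bulk of the work. You instead put the consistency error on the explicit Fokker--Planck solution, which trades that lemma for direct computations on \eqref{sol-FP-four} (these do work: $\partial_\xi\widehat f(\xi,t)/\xi^{\frac{1-p}{p+1}}=e^{-2t}F_0(\xi e^{-(p+1)t})e^{-\alpha\xi^q(1-e^{-2t})}-\frac{2\alpha}{p+1}(1-e^{-2t})\widehat f_0(\xi e^{-(p+1)t})e^{-\alpha\xi^q(1-e^{-2t})}$ with $F_0=\partial_\xi\widehat f_0/\xi^{\frac{1-p}{p+1}}$, so A) and B) transfer to $\widehat f(\cdot,t)$ uniformly in $t\ge 0$). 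But your stability step then runs through the Kac semigroup, which in $\|\cdot\|_q$ is only non-expansive, forcing the linear-in-$T_0$ Gronwall constant and the time-splitting; your supersolution $\eta_\eps(t+\sigma_\eps^{-1})$, the bound $\sigma_\eps\|D_\eps\|_q$ on the bilinear term, the contraction $\|\widehat f(t)-\widehat M_p\|_q\le e^{-2t}\|\widehat f_0-\widehat M_p\|_q$, and the non-expansivity around $\widehat M_p$ (which is precisely part A) of the paper's lemma) all check out.

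The one step that fails as written is $\sup_{t}\|E_\eps(\cdot,t)\|_q\to0$ with the supremum over \emph{all} $\xi\neq0$. The cancellation that kills $E_\eps$ is local: the surviving piece of type $2\int b_\eps\sin^2\theta\,\bigl|(\widehat f(\xi_-,t)-1)/\xi_-^{q}+\alpha\bigr|\,\d\theta$ is small only because $\xi_-\le\xi\sin^{p+1}d_\eps\to0$, which requires $\xi$ to range over a bounded set; and for large $\xi$ the term $2\alpha\widehat f(\xi,t)$ is $\eps$-independent and of order one at $t=0$ (it equals $2\alpha\widehat f_0(\xi)$), so no manipulation of the shrinking support of $b_\eps$ can make $\|E_\eps\|_q$ small over all frequencies. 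The repair is the one the paper uses at the very start of its proof: fix $R=(2/\lambda)^{(p+1)/2}$, dispose of $\xi>R$ at the level of $D_\eps$ itself via $|D_\eps|\le2$, and run the entire consistency--stability argument in the truncated norm $\sup_{0<\xi\le R}|\cdot|/\xi^{q}$, which is legitimate because $\xi_\pm\le\xi$ keeps the Duhamel iteration inside $(0,R]$. With that modification your proof closes.
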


%%%%%%%%%%%%%%%%%%%%%%%%%%%%%%%%%%%%%%%%%%%%%%%%%%%%%%%%%%%%%%%%%%%%%%%%%%%%%%%%%%%%%%%%%%%%%%%%%%%%%%%%%%%%%%%%%%%%%%%%%%%%%%%%
%%%%%%%%%%%%%%%%%%%%%%% lemma di propagazione
%%%%%%%%%%%%%%%%%%%%%%%
%%%%%%%%%%%%%%%%%%%%%%%%%%%%%%%%%%%%%%%%%%%%%%%%%%%%%%%%%%%%%%%%%%%%%%%%%%%%%%%%%%%%%%%%%%%%%%%%%%%%%%%%%%%%%%%%%%%%%%%%%%%%%%%
%\newpage

We begin by proving that assumptions A) and B) on the initial data,
at least for $\eps$ small enough, are uniformly propagated along the
solutions $f_\eps(t)$ of the dissipative Kac equation. In fact we
have

\begin{lem}\label{Propag}
Assume $\{b_\eps(\theta)\}_{\eps >0} \subseteq  L^1([-\frac \pi
2,\frac \pi 2])$ be a family of collision kernels satisfying
pro\-per\-ties \fer{prima} and \fer{seconda} of Definition
\ref{def1}. Assume  $f_0\geq 0$ is an even function,  satisfying the
normalization condition $\int f_0(v)\, \d v=1$.
\begin{enumerate}[A)]
 \item
If there is  $\alpha >0$ such that
 \be\label{ip-lim}
\lim_{\xi \to 0^+} \frac{1-\widehat f_0(\xi)}{\xi^{\frac 2{p+1}}}=\alpha
  \ee
then the solutions $\widehat f_\eps(t)$ of the Kac equations
\fer{eq}--\fer{trascoll} with $f_0$ as initial datum satisfies the
same property uniformly in time and in $\eps$, i.e.
\begin{eqnarray}
&\dis{\lim_{\xi\rightarrow 0^+}\frac{1-\widehat f_\eps(\xi,t)
}{\xi^{\frac 2{p+1}}}=\alpha}\quad \text {uniformly in $\eps$ and
$t$.}\label{lim}
\end{eqnarray}
\item If in addition $\widehat f_0$ is differentiable outside the origin and  the
function ${\partial_\xi\widehat f_0(\xi)}/{\xi^{\frac{1-p}{p+1}}}$
is uniformly $\delta$--H{\"o}lder continuous on bounded  subsets of
$(0,+\infty)$, so that there is $\delta \in (0,1)$ and for all $R>0$
there is  $K(R)> 0$ such that
 \be\label{ip-holder}
 \sup_{\xi,\, \tau
\in (0,R],\ \xi \neq \tau} \frac{ \left| \frac{\partial_\xi\widehat
f_0(\xi)}{\xi^\frac{1-p}{p+1}} - \frac{\partial_\xi\widehat
f_0(\tau)}{\tau^\frac{1-p}{p+1}} \right|} {|\xi -\tau|^{\delta}}
\leq K(R),
 \ee
then  the same is true uniformly for  $\widehat f_\eps (t)$, namely there is $\tilde K(R)\geq K(R)$ such
that
\be\label{holder}
\sup_{t\geq 0}\quad \sup_{\xi,\, \tau \in (0,R],\
\xi \neq \tau} \frac{ \left| \frac{\partial_\xi \widehat f_\eps(\xi,
t)}{\xi^\frac{1-p}{p+1}} - \frac{\partial_\xi\widehat f_\eps(\tau,
t)}{\tau^\frac{1-p}{p+1}} \right|} {|\xi -\tau|^{\delta}} \leq
\tilde K(R),\quad \text{for $\eps$ small enough.}
\ee
\end{enumerate}
\end{lem}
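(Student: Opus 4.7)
The plan is to derive, via Duhamel's formula, linear integral inequalities for the quantities of interest and close them with a Gr\"onwall comparison. The key algebraic observation is that, with $\rho:=2/(p+1)$, $c_\theta:=\cos^{p+1}\theta$ and $s_\theta:=\sin^{p+1}\theta$, one has $c_\theta^{\rho}=\cos^2\theta$ and $c_\theta\cdot c_\theta^{\rho-1}=\cos^2\theta$; dividing the collision integral by $\xi^\rho$ or $\xi^{\rho-1}$ thus yields weights summing to $\cos^2\theta+\sin^2\theta=1$, which matches the damping rate $\sigma_\eps=2\int_0^{\pi/2}b_\eps\,\d\theta$.

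For A), set $w_\eps:=\widehat f_\eps-\widehat M_p$ and exploit the fact that $\widehat M_p$ is a stationary solution of the Kac equation: the identity $ab-cd=(a-c)b+c(b-d)$ gives
\[
\partial_t w_\eps+\sigma_\eps w_\eps=2\int_0^{\pi/2}b_\eps(\theta)\,[w_\eps(\xi c_\theta)\widehat f_\eps(\xi s_\theta)+\widehat M_p(\xi c_\theta)w_\eps(\xi s_\theta)]\,\d\theta.
\]
Setting $U_\eps:=w_\eps/\xi^\rho$ and $W_\eps(\xi,t):=\sup_{0<\tau\leq\xi}|U_\eps(\tau,t)|$, the scaling identity together with $|\widehat f_\eps|,|\widehat M_p|\leq 1$ and $\xi c_\theta,\xi s_\theta\leq\xi$ turns Duhamel into
\[
W_\eps(\xi,t)\leq \e^{-\sigma_\eps t}W_\eps(\xi,0)+\sigma_\eps\int_0^t \e^{-\sigma_\eps(t-s)}W_\eps(\xi,s)\,\d s,
\]
whose constant supersolution yields $W_\eps(\xi,t)\leq W_\eps(\xi,0)$. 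Since hypothesis \eqref{ip-lim} is equivalent to $W_\eps(\xi,0)\to 0$ as $\xi\to 0^+$ (and $W_\eps(\cdot,0)$ is independent of $\eps$), \eqref{lim} follows.

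For B), differentiating the Kac equation in $\xi$ and using $c_\theta\cdot c_\theta^{\rho-1}=\cos^2\theta$, one finds that $G_\eps:=\partial_\xi\widehat f_\eps/\xi^{\rho-1}$ satisfies
\[
\partial_t G_\eps+\sigma_\eps G_\eps=2\int_0^{\pi/2}b_\eps(\theta)\,[\cos^2\theta\,\widehat f_\eps(\xi s_\theta)G_\eps(\xi c_\theta)+\sin^2\theta\,\widehat f_\eps(\xi c_\theta)G_\eps(\xi s_\theta)]\,\d\theta.
\]
Hypothesis \eqref{ip-holder} makes $G_\eps(\cdot,0)$ uniformly $\delta$-H\"older, hence bounded by some $C_0(R)<\infty$, on $(0,R]$; the Gr\"onwall argument of A) then gives $M_\eps(R,t):=\sup_{\xi\in(0,R]}|G_\eps(\xi,t)|\leq C_0(R)$ uniformly in $t\geq 0$ and $\eps>0$.

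Finally, writing Duhamel for $G_\eps(\xi,t)-G_\eps(\tau,t)$, decomposing bilinear differences by add-and-subtract, and estimating $|\widehat f_\eps(\xi s_\theta)-\widehat f_\eps(\tau s_\theta)|$ through $\partial_\zeta\widehat f_\eps=\zeta^{\rho-1}G_\eps$ (using $|G_\eps|\leq C_0(R)$ and the Lipschitz character of $\xi\mapsto\xi^\rho$ on $(0,R]$ since $\rho\geq 1$ for $0<p\leq 1$), one reaches
\[
L_\eps(R,t)\leq \e^{-\sigma_\eps t}K(R)+\frac{D_R}{\sigma_\eps}(1-\e^{-\sigma_\eps t})+\tilde C_\eps\int_0^t \e^{-\sigma_\eps(t-s)}L_\eps(R,s)\,\d s,
\]
with $D_R:=4R^{\rho-\delta}C_0(R)^2$ and $\tilde C_\eps:=2\int_0^{\pi/2}b_\eps(\theta)[\cos^{2+(p+1)\delta}\theta+\sin^{2+(p+1)\delta}\theta]\,\d\theta$. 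The hard step is to keep $\mu_\eps:=\sigma_\eps-\tilde C_\eps$ bounded away from $0$ as $\eps\to 0$, so that the integral inequality closes. Using $\cos^{2+(p+1)\delta}\theta+\sin^{2+(p+1)\delta}\theta\leq\cos^2\theta+\sin^{(p+1)\delta}d_\eps\cdot\sin^2\theta$ on $\supp b_\eps\subset[0,d_\eps]$, together with condition (c) of Definition \ref{def1} and $\int b_\eps\,\d\theta=\sigma_\eps/2$, one obtains $\tilde C_\eps\leq\sigma_\eps-2+2\sin^{(p+1)\delta}d_\eps$, hence $\mu_\eps\to 2$. Comparison with the equality case, that is the linear ODE $Y'+\mu_\eps Y=D_R$, $Y(0)=K(R)$, then gives $L_\eps(R,t)\leq\max(K(R),D_R/\mu_\eps)\leq\tilde K(R):=\max(K(R),4R^{\rho-\delta}C_0(R)^2)$ uniformly in $t\geq 0$ and $\eps$ small enough, proving \eqref{holder}.
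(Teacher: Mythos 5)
Your proposal is correct in substance but organized quite differently from the paper's proof, and it carries one caveat about the hypotheses. The paper never writes a Gr\"onwall inequality: it represents $\widehat f_\eps$ by the Wild expansion \eqref{wild}, observes that $\e^{-\sigma_\eps t}\sum_{n}(1-\e^{-\sigma_\eps t})^n=1$, and reduces every uniform-in-$(t,\eps)$ bound to a single estimate on the bilinear gain $Q_+^\eps(\phi,\psi)$ that reproduces the same constant, propagated by induction on the iterates $\varphi_n^\eps$. Your route --- Duhamel plus a supersolution/comparison argument for $W_\eps$, $M_\eps$ and $L_\eps$ --- rests on exactly the same algebraic facts (comparison with the stationary state $\widehat M_p$ in part A); the weights $\cos^2\theta+\sin^2\theta=1$ after division by $\xi^{2/(p+1)}$ or $\xi^{(1-p)/(p+1)}$; the deficit of $\cos^{2+\delta(p+1)}\theta+\sin^{2+\delta(p+1)}\theta$ below $1$) and reaches the same conclusions. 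What the Wild expansion buys is that all a priori finiteness and differentiability issues are settled on the explicit iterates (the paper justifies $\partial_\xi\varphi_1^\eps$ by Lebesgue's theorem); if you apply Gr\"onwall directly to the solution you should either first check that $t\mapsto W_\eps(\xi,t)$ and $t\mapsto L_\eps(R,t)$ are finite and measurable, or run the comparison on the Picard iterates \eqref{succ-duhamel} and pass to the limit.

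The one genuine discrepancy is in the closure of part B): your bound $\tilde C_\eps\leq\sigma_\eps-2+2\sin^{(p+1)\delta}d_\eps$ and your control of $D_R$ both invoke the normalization \fer{terza}, $\int_0^{\pi/2}b_\eps(\theta)\sin^2\theta\,\d\theta=1$, whereas the lemma deliberately assumes only \fer{prima} and \fer{seconda}. The paper avoids \fer{terza} by absorbing the cross term pointwise: once $\tilde K(R)\geq 2C(R)K_1(R)K_2(R)$, the integrand is dominated by $\tilde K(R)\left(\cos^{2+\delta(p+1)}\theta+\sin^{2+\delta(p+1)}\theta+\cos^2\theta\sin^2\theta\right)$, which is $\leq 1$ on the support of $b_\eps$ for $\eps$ small, so dividing by $\sigma_\eps$ closes the induction with no normalization of $b_\eps$ whatsoever. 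Your argument can be repaired in the same spirit --- both the inhomogeneity (proportional to $\int b_\eps\cos^2\theta\sin^2\theta\,\d\theta$) and the gap $\mu_\eps=\sigma_\eps-\tilde C_\eps$ are comparable to $\int b_\eps\theta^2\,\d\theta$ as $d_\eps\to0$, so their ratio stays bounded without \fer{terza} --- but as written your proof establishes the lemma only under the additional hypothesis \fer{terza}, which happens to hold in the application to Theorem \ref{teo1} but is not part of the statement.
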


\noindent {\bf Proof  of Lemma \ref{Propag}:}
Since the stationary solution $\widehat M_p(\xi) = e^{-\alpha
|\xi|^{\frac 2{p+1}}}$ satisfies \eqref{ip-lim}, for all $\eta>0$
there exists $\lambda
>0$ such that
\be\label{A)}
 \sup_{0<\xi<\lambda}\frac {\left|\widehat M_p(\xi)-\widehat
f_0(\xi)\right|}{\xi^{\frac 2{p+1}}}<\eta .
 \ee
Hence it is enough to prove that for all $\eps>0$ and $t>0$
\be\label{M-lim}
\sup_{0<\xi<\lambda}\frac {|\widehat M_p(\xi)-\widehat
f_\eps(\xi,t)|}{\xi^{\frac 2 {p+1}}}<\eta.
 \ee
The unique solution $\widehat f_\eps(\xi,t)$ can be expressed using
the Wild expansion (cfr. equation \fer{wild}). In the particular
case of even initial data we have
 \[
\widehat f_\eps(\xi, t)= \e^{-{\sigma_\eps } t}
\sum_{n=0}^\infty \varphi_n^\eps(\xi)
(1-e^{-{\sigma_\eps } t})^n,
\]
where
\[
 \sigma_\eps=
2\int_0^{\frac \pi 2} b_\eps(\theta)\, \d \theta,
\]
\[
\begin{aligned}
&\varphi_0^\eps(\xi) = \widehat f_0(\xi),\\
&\varphi_{n+1}^\eps(\xi) = \frac 1{n+1} \sum_{j=0}^n
Q_+^\eps(\varphi_j^\eps, \varphi_{n-j}^\eps)(\xi),
\end{aligned}
\]
and
\[
Q_+^\eps(\phi,\psi)(\xi) = \frac 1{{\sigma_\eps }} \int_0^{\frac \pi
2} b_\eps(\theta) \left[ \phi(\xi \cos^{p+1} \theta) \psi(\xi
\sin^{p+1} \theta) + \psi(\xi \cos^{p+1} \theta) \phi(\xi \sin^{p+1}
\theta)\right ] \, \d \theta.
\]
Since for all $t>0$
\[
\e^{-{\sigma_\eps } t} \sum_{n=0}^\infty
 (1-e^{-{\sigma_\eps } t})^n=1,
 \]
inequality \eqref{M-lim} follows provided
 \be\label{ric}
\sup_{0<\xi<\lambda}\frac {|\widehat M_p(\xi)-
\varphi_n^\eps(\xi)|}{\xi^{\frac 2 {p+1}}}<\eta
 \ee
uniformly in $n$.  It is enough to prove that \fer{ric} holds for
$\varphi_1^\eps$.  Then  by a recursive argument \eqref{ric} holds
for any $n>1$. Since
 \[
 \widehat M_p(\xi)= \frac2{\sigma_\eps} \int_0^{\frac\pi
2}b_\eps(\theta)\widehat M_p(\xi\cos^{p+1} \theta) \widehat M_p(\xi
\sin^{p+1} \theta)\, \d \theta \, ,
\]
 we have
\[
 \begin{aligned}
\varphi_{1}^\eps(\xi)-\widehat M_p(\xi) &= \frac 2{\sigma_\eps}
 \int_0^{\frac\pi 2}b_\eps(\theta)\left[\widehat f_0(\xi \cos^{p+1} \theta)
\widehat f_0(\xi \sin^{p+1} \theta)-\widehat M_p(\xi \cos^{p+1} \theta) \widehat M_p(\xi \sin^{p+1} \theta)\right]\,
\d \theta\\
&=  \frac 2{\sigma_\eps}\int_0^{\frac\pi 2}b_\eps(\theta)\,
\left\{\left[\widehat f_0(\xi \cos^{p+1}\theta)-\widehat M_p(\xi
\cos^{p+1} \theta)\right]
\widehat f_0(\xi \sin^{p+1} \theta) \right .\\
&\quad\quad\quad\quad+\left. \left[\widehat f_0(\xi \sin^{p+1}\theta)-\widehat
M_p(\xi \sin^{p+1} \theta)\right  ]\widehat M_p(\xi \cos^{p+1}
\theta)\right \}\, \d \theta.
\end{aligned}
\]
Using the property $|\widehat f_0(\xi)| \le 1$, we obtain
\[
\begin{aligned}
\frac{\left|\varphi_1^\eps(\xi)-\widehat M_p(\xi)\right|}{\xi^{\frac
2{p+1}}}&\leq \frac 2{\sigma_\eps}\int_0^{\frac\pi
2}b_\eps(\theta) \left( \cos ^2 \theta\frac{\left|\widehat f_0(\xi
\cos^{p+1} \theta)-
\widehat M_p(\xi \cos^{p+1} \theta)\right|}{\left(\xi \cos^{p+1} \theta\right)^{\frac 2{p+1}} }\right .\\
&\qquad \qquad \left. +\sin^{2} \theta\ \frac{ \left|
\widehat f_0(\xi \sin^{p+1} \theta)-\widehat M_p(\xi \sin^{p+1} \theta
)\right|}
{\left(\xi \sin^{p+1} \theta\right )^{\frac 2{p+1}}}\right) \, \d \theta.\\
\end{aligned}
\]
Finally, since $0<\xi \cos^{p+1}\theta<\xi$ and
$0<\xi\sin^{p+1}\theta<\xi$, by \eqref{A)}
$$
\sup_{0<\xi<\lambda}\frac{\left|\varphi_{1}^\eps(\xi,t)-\widehat
M_p(\xi)\right|}{\xi^{\frac 2{p+1}}}\leq  \frac
{2\lambda}{\sigma_\eps} \int_0^{\frac \pi 2}b_\eps(\theta) \d\theta =\lambda.
$$
This concludes the proof of part $A)$. To prove  $B)$ consider that
conditions \eqref{ip-lim} and \eqref{ip-holder} on the initial data
imply \be\label{lim-der}
 \lim_{\xi \to 0^+}\frac {\partial_\xi \widehat f_0(\xi)}{ \xi^{\frac {1-p}{p+1}}} =-\frac{2\alpha}{p+1}.
\ee Indeed, by \eqref{ip-lim} and Cauchy theorem,  there is
$\{\tilde \xi_n\}$ such that $\tilde \xi_n \to 0^+$ for $n\to
+\infty$ and
\[
 \lim_{n\to +\infty} \frac {\partial_\xi \widehat f_0(\tilde \xi_n)}{\tilde \xi_n^{\frac {1-p}{p+1}}} =-\frac{2\alpha}{p+1}.
\]
Together with the H\"older continuity outside the origin
\eqref{ip-holder}, this leads to \eqref{lim-der}. Therefore
\[
 \left|\frac {\partial_\xi \widehat f_0(\xi)}{ \xi^{\frac {1-p}{p+1}}} +\frac{2\alpha}{p+1}\right | \leq K(R) |\xi|^\delta, \quad \xi \in
 (0,R],
\]
and this implies
\be\label{ip-mag-der}
 \sup_{0<\xi\leq R}  \frac{\left| \partial_\xi \widehat f_0(\xi)\right |}{\xi^\frac{1-p}{p+1}} \leq \frac{2\alpha}{p+1} + K(R) R^\delta :=K_1(R) >0.
\ee
First of all, we prove that condition \eqref{ip-mag-der} is uniformly propagated on any $\varphi_n^\eps$. For
this purpose, it is enough to prove it for $\varphi_1^\eps$.
Recall that
\[
\phi_1^\eps(\xi)= \frac 2 {\sigma_\eps} \int_0^{\frac \pi 2} b_\eps
(\theta) \widehat f_0(\xi \cos^{p+1} \theta) \widehat f_0(\xi
\sin^{p+1} \theta)\,  \d \theta,
\]
and denote
\[
 F_0(\xi) = \frac {\partial_\xi \widehat f_0(\xi)}{\xi^{\frac {1-p}{p+1}}}.
\]
By \eqref{ip-mag-der} and Lebesgue theorem, $\partial_\xi \varphi_1^\eps (\xi)$ exists for all $\xi \neq 0$
and
\begin{multline*}
\partial_\xi \varphi_1^\eps (\xi) = \frac 2{\sigma_\eps} \int_0^{\frac \pi 2} b_\eps(\theta)\left[
\cos^{p+1}\theta \ \partial_\xi
\widehat f_0 (\xi \cos^{p+1}\theta) \widehat f_0 (\xi \sin^{p+1}\theta) \right .\\
\left.
+\sin^{p+1}\theta\  \partial_\xi
\widehat f_0 (\xi \sin^{p+1}\theta) \widehat f_0 (\xi \cos^{p+1}\theta) \right ]\, \d \theta.
\end{multline*}
So, for $0<\xi\leq R$ we have
\begin{multline*}
\frac{\partial_\xi \varphi_1^\eps (\xi)}{\xi^{\frac{1-p}{p+1}}} = \frac 2{\sigma_\eps} \int_0^{\frac \pi 2} b_\eps(\theta)\left[
\cos^{2}\theta \ F_0 (\xi \cos^{p+1}\theta) \widehat f_0 (\xi \sin^{p+1}\theta) \right .\\
\left.
+\sin^{2}\theta\  F_0 (\xi \sin^{p+1}\theta) \widehat f_0 (\xi \cos^{p+1}\theta) \right ]\, \d \theta.
\end{multline*}
Now, using  \eqref{ip-mag-der} and the property $|\widehat f_0(\xi)|\leq 1$ we get
\[
 \frac{\left|\partial_\xi \varphi_1^\eps (\xi)\right |}{\xi^{\frac{1-p}{p+1}}} \leq
   \frac {2K_1(R)}{\sigma_\eps}\int_0^{\frac \pi 2} b_\eps(\theta)\, \d \theta =K_1(R).
\]
By a recursive procedure, we get therefore for all $n>1$ and $0<\xi\leq R$
\be\label{ip-mag-der-n}
 \frac{\left|\partial_\xi \varphi_n^\eps (\xi)\right |}{\xi^{\frac{1-p}{p+1}}} \leq K_1(R)
\ee
and through  Wild expansion \eqref{wild}, we get for all $t\geq 0$ and $0<\xi\leq R$
\be\label{ip-mag-der-sol}
\frac{\left|\partial_\xi \widehat f_\eps (\xi,t)\right |}{\xi^{\frac{1-p}{p+1}}} \leq K_1(R).
\ee
Let us come now to the proof of \eqref{holder} and, as we did before, we recover the result for the
first term $\varphi_1^\eps$.
We have to prove that there is $\tilde K(R) \geq K(R)$ such that for $\xi, \tau \in (0,R]$ and at least for $\eps$ small enough
\[
\left| \frac{\partial_\xi\phi_1^\eps(\xi)}{\xi^{\frac {1-p}{p+1}}} -
\frac{\partial_\xi\phi_1^\eps(\tau)}{\tau^{\frac {1-p}{p+1}}} \right
| \leq \tilde K(R) |\xi -\tau|^{\delta},
\]
where
\begin{multline*}
 \frac{\partial_\xi\phi_1^\eps(\xi)}{\xi^{\frac {1-p}{p+1}}} - \frac{\partial_\xi\phi_1^\eps(\tau)}{\tau^{\frac {1-p}{p+1}}}
 = \\
 \frac 2 {\sigma_\eps} \int_0^{\frac \pi 2} b_\eps (\theta)
 \left[
 \cos^2\theta\,
 F_0(\xi \cos^{p+1} \theta)
\widehat f_0(\xi \sin^{p+1} \theta) +\sin^2\theta \, F_0(\xi \sin^{p+1}
\theta)
\widehat f_0(\xi \cos^{p+1} \theta)\right .\\
\left . - \cos^2\theta\,
 F_0(\tau\cos^{p+1} \theta)
\widehat f_0(\tau \sin^{p+1} \theta) -\sin^2\theta\,
 F_0(\tau \sin^{p+1} \theta)
\widehat f_0(\tau \cos^{p+1} \theta) \right ] \,  \d \theta\, .
 \end{multline*}
Since $f_0$ satisfies \eqref{ip-mag-der}, by Cauchy theorem we
get for all $\xi$ and $\tau$ in $[0,R]$, $\xi \neq \tau$
\[
\left|\frac{\widehat f_0(\xi) -\widehat f_0(\tau)}{\xi^{\frac
2{p+1}}-\tau^{\frac 2 {p+1}}} \right |= \frac {p+1}2 \left|
\frac{\partial_\xi \widehat f_0(\bar \xi)}{\bar \xi^{\frac
{1-p}{p+1}}}\right | \leq \frac {p+1}2 K_1(R) :=K_2(R)
\]
with $\bar \xi$ between $\xi$ and $\tau$. Consequently $\widehat
f_0$ satisfies
\be\label{hold-func}
\left|\widehat f_0(\xi)
-\widehat f_0(\tau)\right | \leq K_2(R) \left| \xi^{\frac
2{p+1}}-\tau^{\frac 2 {p+1}} \right|\quad \text{for all } \xi, \tau
\in [0,R].
\ee
Using \eqref{ip-holder},
\eqref{ip-mag-der} and \eqref{hold-func}, we get
\[
\begin{aligned}
&\left | \frac{\partial_\xi\phi_1^\eps(\xi)}{\xi^{\frac {1-p}{p+1}}} - \frac{\partial_\xi\phi_1^\eps(\tau)}{\tau^{\frac {1-p}{p+1}}} \right |\\
&\leq \frac 2 {\sigma_\eps} \int_0^{\frac \pi 2} b_\eps (\theta)
 \left \{ \cos ^2\theta \left[ \left| F_0(\xi \cos^{p+1} \theta) -F_0(\tau \cos^{p+1} \theta)\right | \left| \widehat f_0(\xi \sin^{p+1} \theta)\right | \right .\right .\\
& \hskip 3cm +\left. \left| F_0(\tau \cos^{p+1} \theta)\right |
\left|\widehat f_0(\xi \sin^{p+1} \theta)  - \widehat f_0(\tau \sin^{p+1} \theta)\right | \right ] \\
&\hskip 2.5cm +\sin^2 \theta  \left[ \left| F_0(\xi \sin^{p+1}
\theta) - F_0(\tau \sin^{p+1} \theta)  \right |
\left| \widehat f_0(\xi \cos^{p+1} \theta)\right |\right . \\
&\hskip 3cm + \left.\left. \left| F_0(\tau \sin^{p+1} \theta)\right
| \left| \widehat f_0(\xi \cos^{p+1} \theta)-   \widehat f_0(\tau \cos^{p+1}
\theta) \right | \right ] \right \}\,  \d \theta
\\
& \leq \frac 2 {\sigma_\eps} \int_0^{\frac \pi 2} b_\eps (\theta)
 \left \{
 \cos^2 \theta \left [K(R) \left| \xi \cos^{p+1} \theta -\tau\cos^{p+1} \theta\right |^{\delta}
 \right . \right .
\\
 & \hskip 3cm
 + \left. K_2(R)K_1(R) \left| (\xi \sin^{p+1} \theta)^{\frac 2{p+1}} -(\tau\sin^{p+1} \theta)^{\frac 2{p+1}} \right |\right ]
 \\
 &      \hskip 2.5cm
 +\sin^2 \theta\left [K(R) \left| \xi \sin^{p+1} \theta -\tau \sin^{p+1} \theta \right |^{\delta}\right .\\
 &  \left. \left. \hskip 3cm +
  K_2(R)K_1(R) \left| (\xi \cos^{p+1} \theta)^{\frac 2{p+1}} -(\tau\cos^{p+1}\theta)^{\frac 2{p+1}}\right |
  \right ]\right \} \, \d \theta\\
 & \leq \frac 2 {\sigma_\eps} \int_0^{\frac \pi 2} b_\eps (\theta)
 \left \{
 K(R) (\cos\theta)^{2+\delta(p+1)} \left| \xi -\tau\right |^{\delta}
 +  K_2(R)K_1(R) \cos^2 \theta \sin^2 \theta \left|\xi^{\frac 2{p+1}} -\tau^{\frac 2{p+1}} \right | \right . \\
 &   \hskip 3cm  \left .+ K(R) (\sin\theta)^{2+\delta(p+1)} \left| \xi -\tau\right |^{\delta} +
  K_2(R)K_1(R) \sin^2 \theta \cos^2 \theta  \left| \xi^{\frac 2{p+1}} -\tau^{\frac 2{p+1}}\right |
  \right \} \, \d \theta.
\end{aligned}
\]
Since
\[
\left| \xi^{\frac 2{p+1}} -\tau^{\frac 2{p+1}}\right | = \frac
2{p+1} |\bar \xi|^{\frac {1-p}{p+1}} |\xi-\tau|,
\]
with $\bar \xi$ between $\xi$ and $\tau$,  for $\xi$, $\tau \in (0,R]$, $\delta
\in (0,1)$ and $ C(R)>0$ suitably chosen, we get
\[
\left| \xi^{\frac 2{p+1}} -\tau^{\frac 2{p+1}}\right | \leq
C(R) |\xi-\tau|^{\delta}.
\]
It follows that
\begin{multline*}
 \left | \frac{\partial_\xi\phi_1(\xi)}{\xi^{\frac {1-p}{p+1}}} - \frac{\partial_\xi\phi_1(\tau)}{\tau^{\frac {1-p}{p+1}}} \right |
 \leq \left| \xi -\tau\right |^{\delta}  \frac 2 {\sigma_\eps} \int_0^{\frac \pi 2} b_\eps (\theta)
 \left \{
 K(R) \left(\cos^{2+\delta(p+1)}\theta + \sin^{2+\delta(p+1)}\theta\right ) \right . \\
 \left . + 2 C(R) K_2(R)K_1(R) \cos^2 \theta \sin^2 \theta \right \} \, \d
 \theta.
 \end{multline*}
By assuming $\tilde K(R) \geq K(R)$ large enough, it is not restrictive to assume
\[
 2 C(R) K_2(R)K_1(R) \leq \tilde K(R).
\]
If this is the case,
\begin{multline*}
 K(R) \left(\cos^{2+\delta(p+1)}\theta  + \sin^{2+\delta(p+1)}\theta \right )+ 2 C(R) K_2(R)K_1(R) \cos^2 \theta \sin^2 \theta\\
 \leq \tilde K(R) \left(\cos^{2+\delta(p+1)}\theta  + \sin^{2+\delta(p+1)}\theta +   \cos^2 \theta \sin^2 \theta\right ).
 \end{multline*}
On the other hand, for $\theta$ sufficiently close to zero,
\[
\cos^{2+\delta(p+1)}\theta  + \sin^{2+\delta(p+1)}\theta+   \cos^2
\theta \sin^2 \theta = 1-\frac {\delta(p+1)} 2 \theta^2  + o(\theta^2).
\]
Hence, there is $\bar \theta>0$
such that for $\theta \in [0,\bar \theta]$
\[
 \cos^{2+\delta(p+1)}\theta  + \sin^{2+\delta(p+1)} \theta+   \cos^2 \theta \sin^2 \theta  <1.
\]
By condition \eqref{seconda} of Definition \ref{def1}  for $\eps$
small enough
\[
\begin{aligned}
& \left | \frac{\partial_\xi\phi_1^\eps(\xi)}{\xi^{\frac {1-p}{p+1}}} - \frac{\partial_\xi\phi_1^\eps(\tau)}{\tau^{\frac {1-p}{p+1}}} \right |\\
&\leq \tilde K(R)  \left| \xi -\tau\right |^{\delta}  \frac 2
{\sigma_\eps} \int_0^{\bar \theta } b_\eps (\theta)
\left(\cos^{2+\delta(p+1)}\theta  + \sin^{2+\delta(p+1)}\theta +  \cos^2 \theta \sin^2 \theta \right ) \, \d \theta\\
&\leq  \tilde K(R)  \left| \xi -\tau\right |^{\delta} \frac 2
{\sigma_\eps} \int_0^{\bar \theta } b_\eps (\theta) \, \d \theta
\leq \tilde  K(R)  \left| \xi -\tau\right |^{\delta}.
\end{aligned}
\]
Finally
 \be\label{hol-phi1}
 \left | \frac{\partial_\xi\phi_1^\eps(\xi)}{\xi^{\frac {1-p}{p+1}}} - \frac{\partial_\xi\phi_1^\eps(\tau)}{\tau^{\frac {1-p}{p+1}}} \right |\\
\leq  \tilde K(R)  |\xi-\tau|^{\delta}.
\ee
Recursively, we can prove the same estimate for $\varphi_n^\eps$,
$n>1$ and therefore for the solution $\widehat f_\eps$. This concludes the proof of part $B)$. \hfill $\square$

\bigskip

We are now in a position to prove Theorem \ref{teo1}.

\vskip 5mm

%%%%%%%%%%%%%%%%%%%%%%%%%%%%%%%%%%%%%%%%%%%%%%%%%%%%%%%%%%%%%%%%%%%%%%%%%%%%%%%%%%%%%%%%%%%%%%%%%%%%%%%%%%%%
%%%%%%%%%%%  dimostrazione del teorema
%%%%%%%%%%%%%%%%%%%%%%%%%%%%%%%%%%%%%%%%%%%%%%%%%%%%%%%%%%%%%%%%%%%%%%%%%%%%%%%%%%%%%%%%%%%%%%%%%%%%%%%%%%%%%%%%%%%%%%

\noindent {\bf Proof  of Theorem \ref{teo1}:} Let us first underline
again that if $f_0$ is even, then the solutions $f_\eps(t)$ and
$f(t)$ are even functions together with their Fourier transforms.
Our goal will be to prove that
\[
\lim_{\eps \to 0}\sup_{t\geq 0,\ \xi \neq 0} \frac{\left|\widehat
f_\eps(\xi, t)-\widehat f(\xi, t)\right |}{|\xi|^{\frac 2{p+1}}} =0.
\]
In more detail, we will show that for all ${\lambda} >0$ there is
$\bar \eps =\bar \eps ({\lambda})$ such that for all $0<\eps<\bar
\eps$
\be \sup_{t\geq 0,\ \xi \neq 0} \frac{\left|\widehat
f_\eps(\xi, t)-\widehat f(\xi, t)\right |}{|\xi|^{\frac 2{p+1}}} <
{\lambda}. \label{8}
\ee
Since $|\widehat f_\eps(\xi, t)-\widehat
f(\xi, t)| \leq 2$, \fer{8} holds for $\xi \ge R>0$, if $R$ is large
enough (it is enough to let $R>\left(\frac
2{\lambda}\right)^{\frac{p+1}2}$). Therefore, let us prove \fer{8}
when $0<|\xi| \leq R$. Since $\widehat f_\eps$ and $\widehat f$ are
even functions, it is enough to consider $\xi >0$. It
holds
\begin{multline*}
 \partial_t \left(\widehat f_\eps (\xi,t) -\widehat f(\xi,t)\right ) =
 2\int_{0}^{\frac \pi 2} b_\eps(\theta) \left( \widehat f_\eps(\xi \cos^{p+1} \theta ,t)
  \widehat f_\eps(\xi \sin^{p+1} \theta,t) -\widehat f_\eps(\xi,t)\right )\, \d \theta \\
 +2\alpha \xi^{\frac 2{p+1}} \widehat f(\xi, t)+ (p+1) \xi \partial_\xi \widehat f(\xi, t).
 \end{multline*}
 In what follows, in order to shorten  formulas, we will often drop the dependence of the $t$ variable. We get
 \begin{multline*}
  \partial_t \left(\widehat f_\eps (\xi) -\widehat f(\xi)\right ) = -2\alpha \xi^{\frac 2{p+1}}\left(\widehat f_\eps(\xi)- \widehat f(\xi)\right )- (p+1) \xi \partial_\xi \left(\widehat f_\eps(\xi)-\widehat f(\xi)\right ) \\
  +  2\int_{0}^{\frac \pi 2} b_\eps(\theta) \left( \widehat f_\eps(\xi \cos^{p+1} \theta) \widehat f_\eps(\xi \sin^{p+1} \theta) -\widehat f_\eps(\xi)\right )\, \d \theta\\
   + 2\alpha   \xi^{\frac 2{p+1}}\widehat f_\eps(\xi)+ (p+1) \xi \partial_\xi \widehat
   f_\eps(\xi),
 \end{multline*}
that corresponds to
 \be \label{9}
 \partial_t \left( \frac{\widehat f_\eps (\xi) -\widehat f(\xi)}{\xi^{\frac 2{p+1}}}\right) = -2\alpha  \left(\widehat f_\eps(\xi)- \widehat f(\xi)\right )- (p+1)  \frac{\partial_\xi \left(\widehat f_\eps(\xi)-\widehat f(\xi)\right)}{\xi^{\frac {1-p}{p+1}}} +R_\eps(\xi,t)
\ee where
\begin{multline} \label{10}
 R_\eps(\xi, t)=  \frac 2 {\xi^{\frac 2{p+1}}} \int_{0}^{\frac \pi 2} b_\eps(\theta)  \widehat f_\eps(\xi \cos^{p+1} \theta)\left( \widehat f_\eps(\xi \sin^{p+1} \theta) -1\right ) \d \theta
  + 2\alpha \widehat f_\eps(\xi)\\
 +\frac 2 {\xi^{\frac 2{p+1}}} \int_{0}^{\frac \pi 2} b_\eps(\theta) \left( \widehat f_\eps (\xi \cos^{p+1}\theta)-\widehat f_\eps(\xi)\right )\, \d \theta
  + (p+1) \frac {\partial_\xi \widehat f_\eps(\xi) }{\xi^{\frac {1-p}{p+1}}}= A_\eps(\xi, t)+B_\eps(\xi, t).
  \end{multline}
Thanks to assumption \eqref{terza} on $b_\eps$ we obtain
\[
\begin{aligned}
A_\eps (\xi,t) &= \frac 2 {\xi^{\frac 2{p+1}}} \int_{0}^{\frac \pi
2} b_\eps(\theta)  \widehat f_\eps(\xi \cos^{p+1} \theta)\left( \widehat
f_\eps(\xi \sin^{p+1} \theta) -1\right )\, \d \theta
  + 2\alpha \widehat f_\eps(\xi)\\
  &=  2\int_{0}^{\frac \pi 2} b_\eps(\theta)  \sin^{2} \theta\widehat f_\eps(\xi \cos^{p+1} \theta)\frac{ \widehat f_\eps(\xi \sin^{p+1} \theta) -1}  {\left(\xi\sin^{p+1} \theta\right ) ^{\frac 2{p+1}}}\, \d \theta
  + 2\alpha \widehat f_\eps(\xi)\\
  & =  2\int_{0}^{\frac \pi 2} b_\eps(\theta)  \sin^{2} \theta\widehat f_\eps(\xi \cos^{p+1} \theta)\left(\frac{ \widehat f_\eps(\xi \sin^{p+1} \theta) -1}  {\left(\xi\sin^{p+1} \theta\right ) ^{\frac 2{p+1}}}+\alpha \right )\,  \d \theta\\
 &\quad \quad - 2\alpha  \int_{0}^{\frac \pi 2} b_\eps(\theta)  \sin^{2} \theta\left(\widehat f_\eps(\xi \cos^{p+1}
  \theta) -\widehat f_\eps(\xi)\right )\, \d \theta = A_{1,\eps}(\xi,t)+A_{2,\eps}(\xi,t).
\end{aligned}
\]
Let us estimate first $A_{1,\eps}$. Since $\|\widehat
f_\eps\|_\infty \leq 1$, by assumptions \eqref{seconda} and
\eqref{terza} on $b_\eps$ we get for all $t\geq 0$ and $0<\xi \leq
R$
\[
\begin{split}
|A_{1,\eps}(\xi,t)| \leq  2\sup_{t\geq 0, \ c_\eps \leq \theta \leq d_\eps, \ 0<\xi\leq R} \left|\frac{ \widehat f_\eps(\xi \sin^{p+1} \theta,t) -1}  {\left(\xi\sin^{p+1} \theta\right ) ^{\frac 2{p+1}}}+\alpha \right |\\
\leq   2\sup_{ t\geq 0, \ 0<\eta \leq R \left(\sin d_\eps\right
)^{p+1}} \left|\frac{ \widehat f_\eps(\eta,t) -1}  {\eta^{\frac
2{p+1}}}+\alpha \right |
\end{split}
\]
Now,  remembering the uniform convergence in condition \eqref{lim},
we get
\[
\sup_{t\geq 0, \ 0<\xi\leq R}|A_{1,\eps}(\xi,t)|= A(\eps) \to 0,
\quad \eps \to 0.
\]
Let us come to $A_{2,\eps}$. By Lagrange theorem, for all $t\geq 0$
and $0<\xi \leq R$ we obtain
\[
|A_{2,\eps}(\xi, t)| \leq 2\alpha   \int_{0}^{\frac \pi 2}
b_\eps(\theta)  \sin^{2} \theta \left|
\partial_\xi \widehat f_\eps (\bar \xi(\eps, \xi, t))\right | \xi \left|  \cos^{p+1} \theta-1 \right |\, \d \theta
\]
where $0< \xi \cos^{p+1} \theta < \bar \xi(\eps, \xi, t) <\xi \leq
R$.

Thanks to  \eqref{ip-mag-der-sol}, for $\eps$ small enough
 \[
 \sup_{t\geq 0,\ 0<\xi\leq R} \left|\partial_\xi \widehat f_\eps(\xi,t)\right | \leq R^{\frac {1-p}{p+1}} K_1(R) := C'(R)
 \]
and  by assumption \eqref{seconda} on the support of $b_\eps$
$\left|  \cos^{p+1} \theta-1 \right | \leq 1-(\cos d_\eps)^{p+1} \to
0$ for $\eps \to 0$. Finally, owing to assumption \eqref{terza} on
$b_\eps$ we obtain
\[
\sup_{t\geq 0, \ 0<\xi\leq R}|A_{2,\eps}(\xi,t)| \leq 2\alpha C'(R)
R \left( 1-(\cos d_\eps)^{p+1}\right )= B(\eps) \to 0, \quad \eps
\to 0.
\]
Let us now pass to estimate the term $B_\eps$ in \eqref{10}. By
assumption \eqref{terza} on $b_\eps$ we get
\[
\begin{aligned}
B_\eps (\xi, t) &=  2 \int_{0}^{\frac \pi 2} b_\eps(\theta) \frac
{\widehat f_\eps (\xi \cos^{p+1}\theta)-\widehat f_\eps(\xi)} {\xi^{\frac
2{p+1}}}\, \d \theta
  + (p+1) \frac {\partial_\xi \widehat f_\eps(\xi) }{\xi^{\frac {1-p}{p+1}}}\\
  &= - 2 \int_{0}^{\frac \pi 2} b_\eps(\theta) \sin^{2}\theta\, \left( \frac{ \widehat f_\eps (\xi \cos^{p+1}\theta)-\widehat f_\eps(\xi)}  {\left(\xi \cos^{p+1} \theta\right )^{\frac 2{p+1}}- \xi^{\frac 2{p+1}}}
  - \frac {p+1}{2}  \frac {\partial_\xi \widehat f_\eps(\xi) }{\xi^{\frac {1-p}{p+1}}}\right )\, \d \theta.
  \end{aligned}
  \]
By Cauchy  theorem,  for all $t\geq 0 $ and $0<\xi\leq R$ we obtain
\[
%\begin{aligned}
|B_\eps(\xi, t)| \leq (p+1) \int_{0}^{\frac \pi 2} b_\eps(\theta)
\sin^{2}\theta\, \left| \frac {\partial_\xi \widehat f_\eps(\tilde \xi)
} {\tilde \xi^{\frac {1-p}{p+1}} } -   \frac {\partial_\xi \widehat
f_\eps(\xi) } {\xi^{\frac {1-p}{p+1}}}
  \right |\, \d \theta,
%\end{aligned}
\]
where $\tilde \xi$  depends on $\eps,\theta, \xi, t$ and lies in
$(\xi \cos^{p+1} \theta, \xi) \subset (0,R)$. Thanks to the uniform
H\"older continuity \eqref{holder}
\[
 (p+1) \int_{0}^{\frac \pi 2} b_\eps(\theta)
 \sin^{2}\theta\,
\left| \frac {\partial_\xi \widehat f_\eps(\tilde \xi) } {\tilde
\xi^{\frac {1-p}{p+1}} } -   \frac {\partial_\xi \widehat f_\eps(\xi) }
{\xi^{\frac {1-p}{p+1}}}
  \right |\, \d \theta
  \leq
  (p+1) \tilde K(R) \int_{0}^{\frac \pi 2} b_\eps(\theta) \sin^{2}\theta\, |\tilde \xi -\xi|^\delta\, \d
  \theta,
 \]
and by condition \eqref{seconda},
 $ |\tilde \xi -\xi|^\delta  < |\xi|^\delta (1-\cos^{p+1} \theta)^\delta \leq R^\delta \left(1-(\cos d_\eps)^{p+1}\right )^\delta \to 0$ as $\eps \to 0$.
 Therefore
 \[
 \sup_{t\geq 0, \, 0<\xi\leq R} |B_\eps(\xi,t) | \leq (p+1)\tilde K(R)  R^\delta  \left(1-(\cos d_\eps)^{p+1}\right )^\delta
 =C(\eps) \to 0, \quad \eps \to 0.
 \]
Going back to equation \eqref{9}, we proved that
\be \label{15}
\sup_{t\geq 0,\, 0<\xi\leq R} |R_\eps(\xi, t)| =
A(\eps)+B(\eps)+C(\eps) \to 0, \quad \eps \to 0.
\ee
Moreover, since
\[
\partial_\xi\left(
 \frac{\widehat f_\eps (\xi) -\widehat f(\xi)}{\xi^{\frac 2{p+1}}}
 \right )= \frac {\partial_\xi\left(\widehat f_\eps (\xi) -\widehat f(\xi)\right)}{\xi^{\frac 2{p+1}}} - \frac 2{p+1} \frac{\widehat f_\eps (\xi) -\widehat f(\xi)}{\xi^{\frac 2{p+1}+1}}
 \]
 we obtain
 \[
  \frac {\partial_\xi\left(\widehat f_\eps (\xi) -\widehat f(\xi)\right)}{\xi^{\frac{1-p}{p+1}}} =
 \xi \partial_\xi\left(
 \frac{\widehat f_\eps (\xi) -\widehat f(\xi)}{\xi^{\frac 2{p+1}}}
 \right )
 + \frac 2{p+1} \frac{\widehat f_\eps (\xi) -\widehat f(\xi)}{\xi^{\frac 2{p+1}}}
 \]
 so that \eqref{9} can be written as
 \begin{multline*}
  \partial_t  \left( \frac{\widehat f_\eps (\xi) -\widehat f(\xi)}{\xi^{\frac 2{p+1}}} \right )= -2\alpha\,  \xi^{\frac 2{p+1}} \, \left(\frac {\widehat f_\eps(\xi)- \widehat f(\xi)}{\xi^{\frac 2{p+1}}}\right )-
   (p+1)\, \xi \, \partial_\xi\left(
 \frac{\widehat f_\eps (\xi) -\widehat f(\xi)}{\xi^{\frac 2{p+1}}}
 \right )\\
 -  2 \, \left(\frac{\widehat f_\eps (\xi) -\widehat f(\xi)}{\xi^{\frac 2{p+1}}}\right )
+R_\eps(\xi,t).
\end{multline*}
Let us denote
\[
y_\eps(\xi, t) = \frac{\widehat f_\eps (\xi, t) -\widehat
f(\xi,t)}{\xi^{\frac 2{p+1}}}.
\]
With this definition we obtained \be\label{13}
\partial_t y_\eps (\xi, t)+ (p+1)\, \xi\, \partial_\xi y_\eps(\xi,t) = -2\left( \alpha \xi^{\frac 2{p+1}}+1\right ) y_\eps (\xi, t) +R_\eps(\xi, t).
\ee Let us further define
\[
z_\eps(\xi,t) := y_\eps (\xi e^{(p+1)t}, t).
\]
Then
\[
\partial_t z_\eps (\xi, t)= (p+1) \xi e^{(p+1)t}\partial_\xi y_\eps (\xi e^{(p+1)t}, t) + \partial_t y_\eps (\xi e^{(p+1)t},
t),
\]
and $z_\eps$ satisfies
\[
\partial_t z_\eps (\xi, t) = -2\left( \alpha \xi^{\frac 2{p+1}}e^{2t} +1\right ) z_\eps (\xi, t) +R_\eps(\xi e^{(p+1)t}, t).
\]
Integrating in time we get
\begin{multline}
\label{14}
z_\eps( \xi, t)= z_\eps (\xi, 0) e^{-\left(\alpha \xi^{\frac 2{p+1}}\left(e^{2t} -1\right ) +2t \right )} \\
+ e^{-\left(\alpha \xi^{\frac 2{p+1}}\left(e^{2t} -1\right ) +2t
\right )} \int_0^t R_\eps (\xi e^{(p+1)s},s) e^{\left(\alpha
\xi^{\frac 2{p+1}}\left(e^{2s} -1\right ) +2s \right )} \, \d s.
\end{multline}
Since $\widehat f_\eps(t)$ and $\widehat f(t)$ correspond to the
same initial data, $z_\eps (\xi, 0)= y_\eps (\xi, 0)= 0$. From
\eqref{14} we obtain
\[
y_\eps(\xi, t)= e^{-\left(\alpha \xi^{\frac
2{p+1}}\left(1-e^{-2t}\right ) +2t \right )}
\int_0^t R_\eps (\xi e^{-(p+1)(t-s)},s) e^{\left(\alpha \xi^{\frac 2{p+1}}e^{-2t} \left(e^{2s}-1\right ) +2s \right )} \, \d s\, . \\
\]
Hence, for all $t\geq 0$ and $0<\xi \leq R$
\[
\begin{aligned}
|y_\eps(\xi, t)|& \leq e^{-\left(\alpha \xi^{\frac
2{p+1}}\left(1-e^{-2t}\right ) +2t \right )}
\int_0^t |R_\eps (\xi e^{-(p+1)(t-s)},s) | e^{\left(\alpha \xi^{\frac 2{p+1}}e^{-2t} \left(e^{2t}-1\right ) +2s \right )} \, \d s\\
&\leq \sup_{t\geq 0, 0<\eta \leq R}|R_\eps (\eta, t)| e^{-2t} \int_0^t e^{2s}\,  \d s  = \left(A(\eps)+B(\eps)+C(\eps) \right ) \frac {e^{-2t}}2\left(e^{2t}-1\right ) \\
&\leq \frac 12  \left(A(\eps)+B(\eps)+C(\eps) \right ).
 \end{aligned}
\]
Finally,  for all ${\lambda} >0$ we proved that there is $\bar \eps
=\bar\eps ({\lambda})$ such that for all $0<\eps <\bar \eps$
\[
\sup_{t\geq 0,\, 0<\xi\leq R} \frac{\left|\widehat f_\eps (\xi, t)
-\widehat f(\xi,t)\right |}{\xi^{\frac 2{p+1}}} <  {\lambda}\, .
\]
This ends the proof.

\hfill $\square$

\bigskip
%%%%%%%%%%%%%%%%%%%%%%%%%%%%%%%%%%%%%%%%%%%%%%%%%%%%%%%%%%%%
%%%%%%% qualche esempio di dati iniziali
%%%%%%%
%%%%%%%%%%%%%%%%%%%%%%%%%%%%%%%%%%%%%%%%%%%%%%%%%%%%%%%%%%%%%%%%%%
 In order to give a simple example of initial data which fulfill the assumptions made in Theorem \ref{teo1}, it is enough to consider
 \[
  f_0(v)= \frac 12\left(\widetilde M_p + \phi \right )(v)
 \]
 where $\widetilde M_p(v) = {\cal F}^{-1}\left(e^{-2 \alpha |\xi|^{\frac 2{p+1}}}\right )$ and $\phi$ is an even probability density
 which satisfies $\int |v|^{\frac 2{p+1} +\delta} \phi (v)\, \d v <+\infty$,\ for $0<\delta\leq \frac {2p}{p+1}
 $. Note that an even probability density with bounded energy satisfies this
assumption. In this case $ \lim_{\xi \to 0^+} \frac {1-\widehat
f_0(\xi)}{\xi^{\frac 2{p+1}}}=\alpha$. Moreover $\frac {\partial_\xi
\widehat {\widetilde M}_p(\xi)}{\xi^{\frac{1-p}{p+1}}}$ is uniformly
$\delta$-H\"older continuous on bounded subsets of $(0,+\infty)$. On
the other hand,  thanks to the moment condition we can show that for
$\frac {\partial_\xi \widehat \phi(\xi)}{\xi^{\frac {1-p}{p+1}}}$ is
 uniformly $\delta$--H\"older continuous on $(0,+\infty)$. To this extent, since $\phi $ is an even function we can write
\[
\frac {\partial_\xi \widehat \phi(\xi)}{\xi^{\frac {1-p}{p+1}}}= - \int v\phi (v)\, \frac {\sin (v\xi)}{\xi^{\frac {1-p}{p+1}}}\, \d v.
\]
We remark that, for $x>0$, the function $\frac {\sin x}{x ^{\frac
{1-p}{p+1}}}$ is uniformly $\delta$--H\"older continuous for any
$0<\delta \leq \frac {2p}{p+1}$. In fact for $|x-y| \geq 1$
\[
 \left| \frac {\sin x }{x ^{\frac {1-p}{p+1}}} - \frac {\sin y}{y ^{\frac {1-p}{p+1}}}\right | \leq C \leq C|x-y|^{\delta}.
\]
For $|x-y|\leq 1$ (and $0<x<y$ with no loss in generality), Cauchy
theorem guarantees that there is $x<\bar x<y$ such that
\[
\begin{aligned}
 &\left|
 \frac {\frac {\sin x}{x ^{\frac {1-p}{p+1}}} - \frac {\sin y}{y ^{\frac {1-p}{p+1}}}}{x^{\frac {2p}{p+1}}-y^{\frac {2p}{p+1}}}
 \right| =
 \left|\frac {
 \frac {\cos \bar x}{\bar x^{\frac{1-p}{p+1}}}-\frac {1-p}{p+1}\frac {\sin \bar x}{\bar x^{\frac 2{p+1}}}
 }{\frac {2p}{p+1} \bar x^{\frac {p-1}{1+p}}}\right |\\
& =\frac {p+1}{2p}
 \left|
{\cos \bar x}-\frac {1-p}{p+1}\frac {\sin \bar x}{\bar x}
 \right | \leq C.
 \end{aligned}
\]
Therefore, for $0<\delta \leq \frac {2p}{p+1}$
\[
\begin{aligned}
\left|\frac {\sin x}{x ^{\frac {1-p}{p+1}}} - \frac {\sin y}{y ^{\frac {1-p}{p+1}}}\right |  \leq C \left | x^{\frac {2p}{p+1}}-y^{\frac {2p}{p+1}}\right |
\leq C |x-y|^{\frac {2p}{p+1}}\leq C |x-y|^{\delta}.
\end{aligned}
\]
Finally, for $\xi, \tau  >0 $  we get
 \[
 \begin{aligned}
& \left|\frac {\partial_\xi \widehat \phi(\xi)}{\xi^{\frac {1-p}{p+1}}}-\frac {\partial_\tau \widehat \phi(\tau)}{\tau^{\frac {1-p}{p+1}}}\right | \leq
\int
|v| \phi (v)\,  \left| \frac {\sin(v\xi)}{\xi^{\frac {1-p}{p+1}}} - \frac {\sin (v\tau)}{\tau^{\frac {1-p}{p+1}}} \right |\, \d v\\
& =
\int
|v|^{\frac 2{p+1}} \phi (v)\,  \left| \frac {\sin (v\xi)}{(|v|\xi)^{\frac {1-p}{p+1}}} - \frac {\sin (v\tau)}{(|v|\tau)^{\frac {1-p}{p+1}}} \right |\, \d v\\
& \leq C
 \int  |v|^{\frac 2{p+1}} \phi (v)\, |  |v|\xi-|v|\tau|^\delta \, \d v\\
 &= C |\xi-\tau|^{\delta} \int  |v|^{\frac 2{p+1}+\delta } \phi (v)\, \d v = C |\xi-\tau|^{\delta}.
\end{aligned}
 \]
A second example is furnished by the initial datum
\[
   f_0(v)= \left(M_p \ast \phi\right) (v)
\]
where $M_p(v)= {\cal F}^{-1}\left(e^{- \alpha |\xi|^{\frac 2{p+1}}}\right )$ and $\phi$ is as above.

\bigskip

It is interesting to remark that, under the assumptions on the
initial data given in Theorem \ref{teo1}, we obtain the large-time
convergence result of \cite{PT04}.

\begin{prop}\label{convergenza}
Let $0<p\leq 1$ and let $f_\eps(t)$ the unique solution of the
dissipative Kac equation
\[
 \partial_t \widehat f_\eps(\xi, t)= 2\int_{0}^{\frac \pi 2} b_\eps(\theta) \left( \widehat f_\eps(\xi \cos^{p+1} \theta ,t) \widehat f_\eps(\xi \sin^{p+1} \theta,t) -\widehat f_\eps(\xi,t)\right )\, \d \theta
\]
with even initial density $f_0\geq 0$ satisfying the normalization
condition $\int f_0(v)\, \d v=1$  and assumptions \eqref{ip-lim} and
\eqref{ip-holder} of Theorem \ref{teo1}. Let $\widehat M_p(\xi)=
e^{-\alpha|\xi|^{\frac 2{p+1}}}$ the stationary state with $\alpha$
as in condition \eqref{ip-lim}. Then, for any $0<\delta' \leq
\delta$
\[
\lim_{t\to +\infty} \sup_{\xi \neq 0} \frac {|\widehat f_\eps(\xi,
t)-\widehat M_p(\xi)|} {|\xi |^{\frac 2{p+1}+\delta'} }=0.
\]
\end{prop}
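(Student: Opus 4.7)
The plan is to reduce the statement to the Remark following Theorem \ref{PT}, by verifying that the hypotheses \eqref{ip-lim}--\eqref{ip-holder} on $f_0$ imply the weaker condition
\[
\sup_{\xi \neq 0} \frac{|\widehat f_0(\xi) - \widehat M_p(\xi)|}{|\xi|^{\frac{2}{p+1}+\delta}} < +\infty
\]
required by that remark, and then to pass from the exponent $\delta$ to any smaller $\delta'$ by an elementary splitting in $\xi$.

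The verification mimics the computations performed in Lemma \ref{Propag}. Combining assumption \eqref{ip-lim} with the Cauchy theorem and the H\"older estimate \eqref{ip-holder}, as in the derivation of \eqref{lim-der}--\eqref{ip-mag-der}, one obtains, after letting $\tau \to 0^+$, the bound
\[
\left| \frac{\partial_\xi \widehat f_0(\xi)}{\xi^{(1-p)/(p+1)}} + \frac{2\alpha}{p+1} \right| \leq K(R)\, \xi^{\delta}, \qquad \xi \in (0,R].
\]
For the stationary state a direct computation yields
\[
\frac{\partial_\xi \widehat M_p(\xi)}{\xi^{(1-p)/(p+1)}} + \frac{2\alpha}{p+1} = -\frac{2\alpha}{p+1}\bigl(e^{-\alpha \xi^{2/(p+1)}}-1\bigr) = O\bigl(\xi^{2/(p+1)}\bigr),
\]
and since $0<p\leq 1$ forces $\delta \leq 2p/(p+1)\leq 2/(p+1)$, the same $O(\xi^{\delta})$ bound holds on $(0,R]$. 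Subtracting the two estimates and integrating from $0$ to $\xi$, using $\widehat f_0(0)=\widehat M_p(0)=1$ and the integrability of the weight $\eta^{(1-p)/(p+1)}$ at the origin, one finds
\[
|\widehat f_0(\xi)-\widehat M_p(\xi)| \leq C(R) \int_0^\xi \eta^{(1-p)/(p+1)+\delta}\,\d\eta \leq C'(R)\, \xi^{2/(p+1)+\delta}
\]
for $\xi\in(0,R]$. For $|\xi|>R$ the trivial bound $|\widehat f_0 -\widehat M_p|\leq 2$ suffices, and evenness extends the estimate to $\xi<0$. The Remark then applies (for fixed $\eps$) to $\widehat f_\eps$, yielding
\[
\lim_{t\to\infty}\sup_{\xi\neq 0}\frac{|\widehat f_\eps(\xi,t)-\widehat M_p(\xi)|}{|\xi|^{2/(p+1)+\delta}}=0.
\]

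To obtain the conclusion for an arbitrary $\delta'\leq \delta$, fix $\lambda>0$ and split into three regions. On $|\xi|\leq 1$, since $|\xi|^{2/(p+1)+\delta'}\geq |\xi|^{2/(p+1)+\delta}$, the $\delta'$--ratio is dominated by the $\delta$--ratio, which tends to zero uniformly. Choosing $M\geq 1$ such that $2/M^{2/(p+1)+\delta'}<\lambda$, on $|\xi|\geq M$ the ratio is bounded by $2/|\xi|^{2/(p+1)+\delta'}<\lambda$. On the intermediate region $1\leq |\xi|\leq M$, the $\delta'$--ratio equals $|\xi|^{\delta-\delta'}$ times the $\delta$--ratio, and is therefore bounded by $M^{\delta-\delta'}$ times a quantity vanishing with $t$. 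Combining the three estimates proves the claim. The main technical difficulty lies in the first step, i.e.\ the quantitative H\"older-to-integral bound on $\widehat f_0-\widehat M_p$ near the origin; once this is in place, the reduction to Theorem \ref{PT} and the passage from $\delta$ to $\delta'$ are straightforward.
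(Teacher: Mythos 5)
Your proposal is correct and follows essentially the same route as the paper: both reduce the claim to the Remark after Theorem \ref{PT} by bounding $|\widehat f_0-\widehat M_p|/|\xi|^{\frac 2{p+1}+\delta'}$ near the origin through the derivative estimate $\left|\partial_\xi(\widehat f_0-\widehat M_p)(\xi)/\xi^{\frac{1-p}{p+1}}\right|\le C\,\xi^{\delta}$, the only difference being that the paper applies Cauchy's theorem directly with the exponent $\delta'$ (using $\tilde\xi\le 1$ so that $\tilde\xi^{\delta-\delta'}\le 1$), which makes your separate integration step and three-region passage from $\delta$ to $\delta'$ unnecessary though perfectly valid. One small inaccuracy: Theorem \ref{teo1} assumes only $\delta\in(0,1)$, not $\delta\le 2p/(p+1)$, but your comparison of $O(\xi^{2/(p+1)})$ with $O(\xi^{\delta})$ on bounded sets still holds because $\delta<1\le 2/(p+1)$.
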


\begin{proof}
As  already remarked, it is enough to prove
\[
 \sup_{\xi \neq 0} \frac {|\widehat f_0(\xi)-\widehat M_p(\xi)|} {|\xi |^{\frac 2{p+1}+\delta'} } <+\infty
\]
and since $\widehat f_0$ and $\widehat M_p$ are even and bounded functions,
it is enough to consider $0< \xi \leq 1$. The stationary state
$M_p$ satisfies the same conditions \eqref{ip-lim} and
\eqref{ip-holder} as $f_0$ with the same constants. Moreover, since
\[
 \lim_{\xi \to 0^+}\frac {\partial_\xi \widehat M_p(\xi)}{ \xi^{\frac {1-p}{p+1}}} =-\frac{2\alpha}{p+1}
\]
and the same is true for $\widehat f_0$ as we proved in
\eqref{lim-der}, we get
\[
 \lim_{\xi \to 0^+} \frac {\partial_\xi \left( \widehat f_0-\widehat M_p\right )(\xi)}{\xi^{\frac {1-p}{p+1}}}=0.
\]
We can therefore apply condition \eqref{ip-holder} to
$\frac {\partial_\xi \left( \widehat f_0-\widehat M_p\right
)(\xi)}{\xi^{\frac {1-p}{p+1}}}$  and pass to the limit for $\tau \to 0$ in order to get
\be\label{hold-diff}
 \left | \frac {\partial_\xi \left( \widehat f_0-\widehat M_p\right )(\xi)}{\xi^{\frac {1-p}{p+1}}}\right | \leq C |\xi|^\delta
\ee for a suitable $C>0$. Now,  by  Cauchy theorem and H\"older
continuity \eqref{hold-diff}  for $0<\xi \leq 1$ and $0<\delta'\leq
\delta$ we get
\[
\begin{aligned}
 \frac {\left|\widehat f_0(\xi)-\widehat M_p(\xi)\right |} {\xi ^{\frac 2{p+1}+\delta'} } &= \frac 1 {\frac 2 {p+1}+\delta'}
 \frac {\left|\partial_\xi \left( \widehat f_0 -\widehat M_p\right )(\tilde \xi)\right |}{\tilde\xi ^{\frac{1-p}{p+1} +\delta'}}\\
 & \leq \frac C {\frac 2 {p+1}+\delta'}
 \frac {\tilde \xi^\delta}{\tilde\xi^{\delta'}} \leq \bar C
 \end{aligned}
\]
for $0<\tilde \xi <\xi$ and $\bar C>0$ and this ends the proof.
\end{proof}

%%%%%%%%%%%%%%%%%%%%%%%%%%%%%%%%%%%%%%%%%%%%%%%%%%%%%%%%%%%%%%%%%%%%%%%%%%%%%%%%
%%%%%
%%%%% conclusioni su cosa succede per dati con energia finita
%%%%%
%%%%%%%%%%%%%%%%%%%%%%%%%%%%%%%%%%%%%%%%%%%%%%%%%%%%%%%%%%%%%%%%%%%%%%%%%%%%%%%%
\section{Initial data with finite energy}

In the previous sections, we considered initial data $f_0$ with
unbounded energy and we proved that, if these initial data belong to
a suitable neighborhood of the stationary state, the corresponding
solutions of the dissipative Kac equation \fer{boltz}--\fer{collint}
converge to the solution of a fractional Fokker--Planck equation
when the collisions become grazing. In what follows, we briefly
discuss the simpler case in which the initial data have bounded
energy.  As outlined in the Introduction, in this case any solution
of \eqref{boltz}--\eqref{collint} converges in large times to a
Dirac delta function concentrated in $v=0$. This cooling behavior is
maintained the grazing collision limit, where the collision operator
reduces to a simple linear drift operator, while the diffusive term
is lost. The limit equation is therefore \be\label{drift}
\partial_t \widehat f (\xi, t)= - (p+1) \xi \partial_\xi \widehat f(\xi, t).
\ee A simple calculation shows that if $\widehat f_0$ is the initial
density, this equation has a unique explicit solution in the Fourier
variable \be\label{sol-drift-four} \widehat f(\xi,t)= \widehat
f_0(\xi e^{-(p+1)t}), \ee or, in the physical variable
\[
 f(v,t) = e^{(p+1)t} f_0\left( e^{(p+1)t} v\right ).
\]
The next proposition deals with the aforementioned
situation.

\begin{prop} \label{teo2}
Assume $\{b_\eps(\theta)\}_{\eps >0} \subseteq  L^1([-\frac \pi
2,\frac \pi 2])$ be a family of collision kernels satisfying
Definition \ref{def1}. Let
 $0<p\leq 1$ and let
 $f_\eps(t) \in C^1([0,+\infty), L^1(\R))$ be the solutions of the dissipative Kac equations
\[
\partial_t \widehat f_\eps(\xi, t)= 2\int_{0}^{\frac \pi 2} b_\eps(\theta) \left( \widehat f_\eps(\xi \cos^{p+1} \theta ,t) \widehat f_\eps(\xi \sin^{p+1} \theta,t) -\widehat f_\eps(\xi,t)\right )\, \d \theta  \\
\]
where the even initial density $f_0\geq 0$ satisfies the
normalization condition $\int f_0(v)\, \d v=1$ and has finite
energy, $\int v^2 f_0(v)\, \d v=1$.

Then,
\[
 \lim_{\eps \to 0}\sup_{t\geq 0,\ \xi \neq 0} \frac{\left|\widehat f_\eps(\xi, t)-\widehat f(\xi, t)\right |}{|\xi|^{2}} =0
\]
where $f (t)$, given by \eqref{sol-drift-four} is the solution of
the drift equation
\[
\partial_t \widehat f (\xi, t)= - (p+1) \xi \partial_\xi \widehat f(\xi, t)
\]
 with the same initial data $f_0$.
\end{prop}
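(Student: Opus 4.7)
I would follow the architecture of the proof of Theorem \ref{teo1}, but with the crucial simplification that finite energy provides direct $C^2$ control of $\widehat f_\eps$ in place of the delicate propagation step in Lemma \ref{Propag}. By the energy identity \fer{en}, $E_\eps(t):=\int v^2 f_\eps(v,t)\,\d v$ is non-increasing, hence bounded by $1$ uniformly in $t\geq 0$ and $\eps>0$. Together with the zero first moment and the fact that $f_\eps$ is even, this yields the uniform bounds $\widehat f_\eps(0,t)=1$, $\partial_\xi \widehat f_\eps(0,t)=0$, $|\partial^2_\xi \widehat f_\eps(\xi,t)|\leq 1$, and consequently $|1-\widehat f_\eps(\xi,t)|\leq \xi^2/2$ and $|\partial_\xi \widehat f_\eps(\xi,t)/\xi|\leq 1$ for all $\xi,t,\eps$. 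The same bounds hold for $\widehat f(\xi,t)=\widehat f_0(\xi e^{-(p+1)t})$.

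Since $|\widehat f_\eps-\widehat f|\leq 2$, for any $\lambda>0$ the quotient $|\widehat f_\eps-\widehat f|/\xi^2<\lambda$ as soon as $|\xi|\geq R:=\sqrt{2/\lambda}$, and by evenness it suffices to control $y_\eps(\xi,t):=(\widehat f_\eps-\widehat f)(\xi,t)/\xi^2$ on $0<\xi\leq R$. Subtracting the drift equation from the Kac equation, dividing by $\xi^2$, and using $\xi\,\partial_\xi u/\xi^2 = \xi\,\partial_\xi (u/\xi^2) + 2 u/\xi^2$, I get the analogue of \eqref{13}
$$\partial_t y_\eps + (p+1)\xi\,\partial_\xi y_\eps = -2(p+1)\,y_\eps + R_\eps(\xi,t),$$
where
$$R_\eps = \frac{2}{\xi^2}\int_0^{\pi/2} b_\eps(\theta)\bigl[\widehat f_\eps(\xi\cos^{p+1}\theta)\widehat f_\eps(\xi\sin^{p+1}\theta) - \widehat f_\eps(\xi)\bigr]\d\theta + (p+1)\frac{\partial_\xi \widehat f_\eps(\xi,t)}{\xi}.$$
I split $R_\eps = A_\eps + B_\eps$ as in Theorem \ref{teo1}. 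For the ``diffusive'' part, using $|\widehat f_\eps|\leq 1$ and $|\widehat f_\eps(u,t)-1|\leq u^2/2$,
$$|A_\eps(\xi,t)|\leq \int_0^{\pi/2} b_\eps(\theta)\sin^{2(p+1)}\theta\,\d\theta \leq (\sin d_\eps)^{2p}\int_0^{\pi/2} b_\eps(\theta)\sin^2\theta\,\d\theta = (\sin d_\eps)^{2p}\to 0,$$
which uses $p>0$ crucially: this is exactly the cancellation that kills the would-be diffusion term and singles out the pure drift limit.

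For the drift-type part $B_\eps$, a second-order Taylor expansion of $\widehat f_\eps(\xi\cos^{p+1}\theta)$ around $\xi$ decomposes $B_\eps$ into a principal piece $(\partial_\xi\widehat f_\eps(\xi,t)/\xi)\bigl[(p+1)+2\int b_\eps(\cos^{p+1}\theta-1)\,\d\theta\bigr]$ and a quadratic remainder bounded by $\|\partial^2_\xi\widehat f_\eps\|_\infty\int b_\eps(1-\cos^{p+1}\theta)^2\,\d\theta$. The identity $1-\cos^{p+1}\theta=\frac{p+1}{2}\sin^2\theta+o(\sin^2\theta)$ as $\theta\to 0$, the concentration of $\supp b_\eps$ near $0$, and \fer{terza} give $2\int b_\eps(1-\cos^{p+1}\theta)\,\d\theta\to p+1$, which makes the principal piece vanish, while $\int b_\eps(1-\cos^{p+1}\theta)^2\,\d\theta \leq (1-\cos^{p+1}d_\eps)\cdot O(1)\to 0$ kills the quadratic remainder. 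Hence $\|R_\eps\|_{L^\infty((0,R]\times[0,\infty))}\to 0$. To conclude, as in Theorem \ref{teo1}, the change of variables $z_\eps(\xi,t):=y_\eps(\xi e^{(p+1)t},t)$ kills the transport term and reduces the problem to the linear ODE $\partial_t z_\eps = -2(p+1)\,z_\eps + R_\eps(\xi e^{(p+1)t},t)$ with $z_\eps(\xi,0)=0$, yielding $\|y_\eps(\cdot,t)\|_{L^\infty(0,R]}\leq (2(p+1))^{-1}\|R_\eps\|_\infty\to 0$. The main point to verify is that $\partial_\xi\widehat f_\eps(\xi,t)/\xi$ remains bounded near $\xi=0$ (needed to control the principal piece of $B_\eps$); this is however immediate from the uniform $C^2$ bound together with $\partial_\xi\widehat f_\eps(0,t)=0$, so no analogue of the H\"older-continuity propagation of Lemma \ref{Propag} is required.
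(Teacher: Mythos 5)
Your proposal is correct and follows essentially the same route as the paper: restriction to $0<\xi\leq R$, uniform bounds $|\widehat f_\eps|\leq 1$, $|\partial_\xi\widehat f_\eps/\xi|\leq 1$, $|\partial_\xi^2\widehat f_\eps|\leq 1$ obtained from the decay of energy, a second-order Taylor expansion of the collision integrand combined with the grazing conditions to show the remainder vanishes uniformly, and the same characteristics/ODE argument to conclude. The only difference is organizational (you split the remainder into $A_\eps+B_\eps$ as in Theorem \ref{teo1} and expand the two factors separately, whereas the paper expands the full product at once and collects all terms into a single function $g(\theta)$), which does not change the substance of the argument.
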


\begin{proof}
As in the proof of Theorem \ref{teo1} we consider only $0< \xi \leq
R$. Since
 \begin{multline*}
 \partial_t \left(\widehat f_\eps (\xi,t) -\widehat f(\xi,t)\right ) =  2\int_{0}^{\frac \pi 2} b_\eps(\theta) \left( \widehat f_\eps(\xi \cos^{p+1} \theta ,t) \widehat f_\eps(\xi \sin^{p+1} \theta,t) -\widehat f_\eps(\xi,t)\right )\, \d \theta \\
 + (p+1) \xi \partial_\xi \widehat f(\xi, t),
 \end{multline*}
we get
 \begin{multline*}
  \partial_t \left(\widehat f_\eps (\xi) -\widehat f(\xi)\right ) = - (p+1) \xi \partial_\xi \left(\widehat f_\eps(\xi)-\widehat f(\xi)\right ) \\
  +  2\int_{0}^{\frac \pi 2} b_\eps(\theta) \left( \widehat f_\eps(\xi \cos^{p+1} \theta) \widehat f_\eps(\xi \sin^{p+1} \theta) -\widehat f_\eps(\xi)\right )\, \d \theta
  + (p+1) \xi \partial_\xi \widehat f_\eps(\xi).
 \end{multline*}
Therefore we have
\[
\partial_t \left( \frac{\widehat f_\eps (\xi) -\widehat f(\xi)}{\xi^{2}}\right) = - (p+1)  \frac{\partial_\xi \left(\widehat f_\eps(\xi)-\widehat f(\xi)\right)}{\xi} +\frac{R_\eps(\xi,t)}{\xi^2}
\]
where
\begin{multline*}
 R_\eps(\xi, t)=  2\int_{0}^{\frac \pi 2} b_\eps(\theta) \left( \widehat f_\eps(\xi \cos^{p+1} \theta) \widehat f_\eps(\xi \sin^{p+1} \theta) -\widehat f_\eps(\xi)\right )\, \d \theta
  + (p+1) \xi \partial_\xi \widehat f_\eps(\xi).
  \end{multline*}
Now, via a Taylor expansion with Lagrange reminder
\[
\begin{aligned}
\widehat f_\eps(\xi \cos^{p+1} \theta) \widehat f_\eps(\xi \sin^{p+1} \theta) -\widehat f_\eps(\xi) = \ &
\partial_\xi \widehat f_\eps (\xi) \xi \left ( \cos^{p+1} \theta -1\right )
+ \\
&\frac{\xi^2}2 \left[ \partial^2_{\xi} \widehat f_\eps (\bar \xi) \left ( \cos^{p+1} \theta -1\right )^2 + \widehat f_\eps (\xi)\,  \partial^2_{\xi} \widehat f_\eps (\tilde \xi) \sin^{2(p+1)}\theta \right]\\
& + \frac {\xi^3}2 \, \partial_\xi \widehat f_\eps (\xi) \, \partial^2_{\xi} \widehat f_\eps (\tilde \xi)\left ( \cos^{p+1} \theta -1\right ) \sin^{2(p+1)}\theta \\
&+ \frac {\xi^4}4\, \partial^2_{\xi} \widehat f_\eps (\bar \xi)\, \partial^2_{\xi} \widehat f_\eps (\tilde \xi)  \sin^{2(p+1)}\theta  \left ( \cos^{p+1} \theta -1\right )^2
\end{aligned}
\]
where $\bar \xi$ and $\tilde \xi$ depend on $\xi$, $t$, $\eps$ and
$\theta$ and $\bar \xi \in (\xi, \xi \cos^{p+1}\theta)$, $\tilde \xi
\in(0,\xi \sin^{p+1} \theta)$. By assumption $(c)$ on $b_\eps$,
\[
\begin{aligned}
R_\eps(\xi,t) &= 2 \xi\,  \partial_\xi \widehat f_\eps (\xi)\int_{0}^{\frac \pi 2} b_\eps(\theta)\, \left[ \left( \cos^{p+1} \theta -1\right)+ \frac {p+1}2\sin^2 \theta\right ]\, \d \theta\\
&+ \xi^2 \int_{0}^{\frac \pi 2} b_\eps(\theta) \, \left[  \partial^2_{\xi} \widehat f_\eps (\bar \xi) \left ( \cos^{p+1} \theta -1\right )^2 + \widehat f_\eps (\xi)\,  \partial^2_{\xi} \widehat f_\eps (\tilde \xi)\, \sin^{2(p+1)}\theta \right]\, \d \theta\\
&+ \xi^3  \partial_\xi \widehat f_\eps (\xi)  \int_{0}^{\frac \pi 2} b_\eps(\theta)\, \partial^2_{\xi} \widehat f_\eps (\tilde \xi)\, \left ( \cos^{p+1} \theta -1\right ) \sin^{2(p+1)}\theta\, \d \theta\\
&+ \frac{\xi^4}2 \int_{0}^{\frac \pi 2} b_\eps(\theta)\, \partial^2_{\xi} \widehat f_\eps (\bar \xi)\, \partial^2_{\xi} \widehat f_\eps (\tilde \xi)  \sin^{2(p+1)}\theta  \left ( \cos^{p+1} \theta -1\right )^2\, \d \theta.
\end{aligned}
\]
Since the mass is conserved,  and the energy decays, from \eqref{en}
we obtain
\[
\int v^2 f_\eps(v,t)\, \d v = \exp\left\{- t\int_{-\pi/2}^{\pi/2}
b_\eps (\theta) \left( 1- |\sin\theta|^{2+2p} - |\cos\theta|^{2+2p}
\right)\, {\d\theta}\right\},
\]
Hence, for any $\xi \in \R$, $t\geq 0$ and $\eps >0$
\[
\begin{aligned}
&\left| \widehat f_\eps (\xi,t)\right | \leq | \widehat f_\eps (0,t)|= \int f_\eps(v,t) \, \d v =1,\\
&\left| \partial^2_{\xi}  \widehat f_\eps (\xi,t)\right | \leq  \int v^2 f_\eps(v,t)\, \d v \leq 1,\\
&\left| \partial_{\xi}  \widehat f_\eps (\xi,t)\right | \leq  \int  |v| f_\eps(v,t)\, \d v \leq  \left(\int f_\eps(v,t) \, \d v \right )
\left( \int v^2 f_\eps(v,t)\, \d v\right )^{\frac 12} \leq 1.
\end{aligned}
\]
Moreover, since  $\partial_{\xi}  \widehat f_\eps (0,t)=0$ for all $t\geq 0$ we have
\[
\partial_\xi \widehat f_\eps (\xi,t) = \partial^2_{\xi}  \widehat f_\eps (\xi_\ast,t) \xi
\]
for $\xi_\ast$ depending on $\xi$, $t$ and $\eps$, which implies ${
\dis \left|\frac {\partial_\xi \widehat f_\eps (\xi,t)}{\xi} \right
|\leq 1}$ for all $\xi \neq 0$, $t\geq 0$ and $\eps >0$. Thus, for
$0<\xi\leq R$, $t\geq 0$ and $\eps >0$ we get
\[
\left| \frac{R_\eps(\xi,t)}{\xi^2} \right | \leq C(R)
\int_{0}^{\frac \pi 2} b_\eps(\theta) g(\theta)  \, \d \theta,
\]
where $C(R)>0$ is suitably chosen and
\begin{multline*}
g(\theta)=
\left| \left( \cos^{p+1} \theta -1\right)+ \frac {p+1}2\sin^2 \theta\right | +\left ( \cos^{p+1} \theta -1\right )^2 + \sin^{2(p+1)}\theta\\
+ \left | \cos^{p+1} \theta -1\right | \sin^{2(p+1)}\theta + \sin^{2(p+1)}\theta  \left ( \cos^{p+1} \theta -1\right )^2.
\end{multline*}
Since $b_\eps$ satisfies Definition \ref{def1} it follows
\[
 \int_{0}^{\frac \pi 2} b_\eps(\theta) g(\theta)\, \d \theta = R_\eps \to 0,\quad \eps \to 0.
 \]
We can end the proof as in Theorem \ref{teo1} and obtain
\[
 \lim_{\eps \to 0}\sup_{t\geq 0,\ \xi \neq 0} \frac{\left|\widehat f_\eps(\xi, t)-\widehat f(\xi, t)\right |}{|\xi|^{2}} =0.
\]
\end{proof}

\begin{rem}
It is possible to adapt the previous proof to the classical conservative Kac equation considered in \cite{to1}.  In this case, we obtain a simpler proof of the convergence (in the previous Fourier based metric) of
bounded energy solutions  to the classical Fokker-Planck equation  \eqref{FP}, when the collisions become grazing.
\end{rem}

%%%%%%%%%%%%%%%%%%%%%%%%%%%%%%%%%%%%%%%%%%%%%%%%%%%%%%%%%%%%%%%%%%%%%%%%%%%%%%%%%%%%%%%%%%%%%
%%%%%%%%% convergenza in tempo della soluzione di FP allo stato stazionario
%%%%%%%%%
%%%%%%%%%%%%%%%%%%%%%%%%%%%%%%%%%%%%%%%%%%%%%%%%%%%%%%%%%%%%%%%%%%%%%%%%%%%%%%%%%%%%%%%%%%%%
 If we consider now the fractional Fokker-Planck equations
\eqref{FPF}, it is interesting to remark that the time behavior of
the solutions corresponding to initial data with
bounded energy is completely different. In fact, for any initial
data $f_0\geq 0$ satisfying the normalization condition $\int
f_0(v)\,\d v =1$ the solution of \eqref{FPF} converges to the
corresponding stationary state $M_p=\exp \left\{- \alpha
|\xi|^{2/(1+p)} \right\}$ and that irrespective of how many finite
moments the initial data possess. This implies in particular that
the energy of the solution becomes immediately infinite,  even if
this energy was bounded at the beginning of the evolution.

\begin{prop} \label{conv-FP}
Let  $0<p\leq 1$, $\alpha >0$ and let $f (t)$ be the solution of  the Fokker--Planck equation
\[
\partial_t \widehat f(\xi, t)= -2\alpha |\xi|^{\frac 2{p+1}} \widehat f(\xi, t)-(p+1) \xi \partial_\xi \widehat f(\xi, t)
\]
 with initial density $f_0\geq 0$, satisfying the normalization condition $\int f_0(v)\, \d v=1$.
Then,
\be\label{lim-FP-generale}
 \lim_{t \to +\infty} \|f(t)-M_p\|_{L^1}=0.
\ee
If moreover $f_0$  satisfies
\be \label{momenti}
\int v\, f_0(v)\, \d v=0,\quad \int |v|^{\lambda} f_0(v)\, \d v<+\infty,\quad \lambda = \frac 2{p+1},
\ee
then
\be\label{lim-FP-mom}
 \lim_{t \to +\infty}\sup_{\xi \neq 0} \frac{\left|\widehat f(\xi, t)-\widehat M_p(\xi)\right |}{|\xi|^{\frac 2{p+1}}} =0.
\ee
\end{prop}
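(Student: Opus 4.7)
The proof rests on the explicit representations \eqref{sol-FP-four}--\eqref{solFP} of the Fokker--Planck solution, which reduces both assertions to elementary Fourier/convolution manipulations rather than to any PDE estimate on the equation itself.

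For part \eqref{lim-FP-generale}, I would exploit the convolution formula \eqref{solFP} and write $f(t) = \varphi_t \ast N_t$ with
\[
\varphi_t(v) = \tfrac{1}{\beta(t)}\, f_0\!\left(\tfrac{v}{\beta(t)}\right), \qquad N_t(v) = \tfrac{1}{\gamma(t)}\, M_p\!\left(\tfrac{v}{\gamma(t)}\right).
\]
Splitting via the triangle inequality and Young's inequality,
\[
\|f(t) - M_p\|_{L^1} \leq \|\varphi_t\|_{L^1}\,\|N_t - M_p\|_{L^1} + \|\varphi_t \ast M_p - M_p\|_{L^1},
\]
one finds that the first term vanishes because $\|\varphi_t\|_{L^1}=1$, $\gamma(t) \to 1$, and dilations are $L^1$--continuous. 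Since $\beta(t)\to 0$, the non-negative unit-mass family $\{\varphi_t\}$ is an approximate identity on $\R$, and $M_p\in L^1(\R)$ (as a symmetric L\'evy stable density of order $2/(p+1)$), so the second term also vanishes, giving \eqref{lim-FP-generale}.

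For part \eqref{lim-FP-mom}, the key observation is the multiplicative identity
\[
\widehat M_p(\xi) = \widehat M_p(\xi e^{-(p+1)t}) \cdot e^{-\alpha |\xi|^{2/(p+1)}(1-e^{-2t})},
\]
immediate from the exponential form of $\widehat M_p$. Subtracting this from \eqref{sol-FP-four} and factoring yields
\[
\widehat f(\xi,t) - \widehat M_p(\xi) = \bigl[\widehat f_0(\xi e^{-(p+1)t}) - \widehat M_p(\xi e^{-(p+1)t})\bigr] \, e^{-\alpha |\xi|^{2/(p+1)}(1-e^{-2t})}.
\]
With the substitution $\eta = \xi e^{-(p+1)t}$, so that $|\xi|^{2/(p+1)} = |\eta|^{2/(p+1)} e^{2t}$, one computes
\[
\frac{|\widehat f(\xi,t) - \widehat M_p(\xi)|}{|\xi|^{2/(p+1)}} = e^{-2t}\, \frac{|\widehat f_0(\eta) - \widehat M_p(\eta)|}{|\eta|^{2/(p+1)}} \, e^{-\alpha |\eta|^{2/(p+1)}(e^{2t}-1)} \leq A\, e^{-2t},
\]
where $A := \sup_{\eta\neq 0}|\eta|^{-2/(p+1)}|\widehat f_0(\eta) - \widehat M_p(\eta)|$. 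Once $A$ is shown to be finite, letting $t \to \infty$ proves \eqref{lim-FP-mom}.

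The only nontrivial step is therefore the verification that $A<+\infty$ under the moment condition \eqref{momenti}, and this is where I would concentrate the actual work. For $|\eta|\geq 1$ the trivial bound $|\widehat f_0-\widehat M_p|\leq 2$ controls the ratio. For $\eta$ near the origin, I would apply the elementary inequality $|e^{-i\eta v} - 1 + i \eta v| \leq C_\lambda |\eta v|^{\lambda}$, valid for $\lambda := 2/(p+1) \in [1,2)$; combined with $\int v f_0 \, dv = 0$ and $\int |v|^\lambda f_0 \, dv < +\infty$, this yields $|1 - \widehat f_0(\eta)| \leq C\,|\eta|^\lambda$. The companion estimate $|1 - \widehat M_p(\eta)| \leq \alpha |\eta|^\lambda$ follows immediately from $1 - e^{-x} \leq x$. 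Continuity of $\widehat f_0$ and $\widehat M_p$ handles the intermediate compact regime, so $A<+\infty$ as required.
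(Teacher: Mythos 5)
Your proposal is correct and takes essentially the same route as the paper: for \eqref{lim-FP-generale} the convolution representation \eqref{solFP}, the triangle/Young inequality splitting, the $L^1$-continuity of dilations and the approximate-identity property of $\frac 1{\beta(t)}f_0(\cdot/\beta(t))$; for \eqref{lim-FP-mom} the explicit formula \eqref{sol-FP-four} reduced to the uniform bound $\sup_{\eta\neq 0}|\eta|^{-2/(p+1)}\left|1-\widehat f_0(\eta)\right|<+\infty$, yielding the same $O(e^{-2t})$ rate. The only differences are cosmetic: you factor $\widehat f-\widehat M_p$ multiplicatively before rescaling where the paper splits it additively into two terms, and you spell out the derivation of \eqref{ip-magg} from the moment conditions \eqref{momenti}, a step the paper asserts without detail.
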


\begin{proof}
Thanks to \fer{solFP} we have to prove that
\[
 \left\|\frac 1{\beta(t)} f_0\left( \frac \cdot {\beta (t)}\right ) \ast \frac 1{\gamma(t)} M_p \left( \frac \cdot {\gamma(t)}\right )- M_p
 \right \|_{L^1}\freccia 0, \quad t \to +\infty.
\]
To simplify notations, we will write
\[
 \begin{aligned}
 & f_{0,\beta} (v) =\frac 1{\beta(t)} f_0\left( \frac v {\beta (t)}\right ) \\
 & M_{p,\gamma} (v) =\frac 1{\gamma(t)} M_p \left( \frac v {\gamma(t)}\right ).
\end{aligned}
\]
Then
\[
 \left\|f_{0, \beta} \ast M_{p,\gamma}- M_p \right \|_{L^1} \leq
 \left\|f_{0, \beta} \ast M_{p,\gamma} - f_{0, \beta} \ast M_p\right \|_{L^1} + \left\| f_{0, \beta} \ast M_p -M_p\right \|_{L^1}.
 \]
Since $\beta(t) \to 0$ for $t\to +\infty$ and $f_0 \in L^1$ with
$\int f_0(v)\d v=1$, it is classical that $\left\| f_{0, \beta} \ast
M_p -M_p\right \|_{L^1}\to 0$ \cite[page 10]{SW}. Moreover, since
\[
\left\|f_{0, \beta} \ast \left( M_{p,\gamma} -M_p\right )\right
\|_{L^1} \leq \left\|f_{0, \beta}\right \|_{L^1} \left
\|M_{p,\gamma} -M_p\right \|_{L^1},
\]
and
\[
\int f_{0,\beta}(w)\, \d w = \int f_0(w)\, \d w=1,
\]
it is enough to prove  that $ a(t)= \left\|  M_{p,\gamma}
-M_{p}\right \|_{L^1} $ vanishes as time goes to infinity. For any
given $R >0$, we have \be\label{spez}
\begin{aligned}
a(t)&= \int \left|\frac 1{\gamma(t)} M_p \left( \frac v {\gamma(t)}\right ) - M_p(v)\right |\, \d v\\
&=\int_{|v| \leq R} \left|\frac 1{\gamma(t)} M_p \left( \frac v {\gamma(t)}\right ) - M_p(v)\right |\, \d v +  \int_{|v| > R} \left|\frac 1{\gamma(t)} M_p \left( \frac v {\gamma(t)}\right ) - M_p(v)\right |\, \d v
\end{aligned}
\ee
Let us consider first the second term.
% Since it is known \cite{LR} that ${\dis M_p(v) = O\left((1+|v|)^{-\left(1+\frac 2{p+1}\right )}\right )}$ for $|v| \to \infty$, there is $C>0$ and $R_0>0$ so that
% \[
% |M_p(v)| \leq  \frac  C{(1+|v|)^{1+\frac 2{p+1}}}, \quad |v| \geq R_0.
% \]
 Since $\gamma (t) = (1-e^{-2t})^{\frac {p+1}2} \to 1$ for $t\to
 +\infty$, there is $t_0$ so that for $t\geq t_0$ we have $\frac 12 \leq \gamma(t) <1$.
 So,
 %for $R\geq R_0$ we get
 for $t\geq t_0$
\[
\begin{aligned}
&\int_{|v| > R} \left|\frac 1{\gamma(t)} M_p \left( \frac v {\gamma(t)}\right ) - M_p(v)\right |\, \d v  \\
&\leq  \int_{|v| > R} \frac 1{\gamma(t)} M_p \left( \frac v {\gamma(t)}\right )\, \d v +
\int_{|v| > R}M_p(v)\, \d v\\
& = \int_{|v| > \frac R{\gamma (t)}}  M_p (v) \, \d v + \int_{|v| > R} M_p(v)\, \d v \leq 2 \int_{|v| > R}M_p(v)\, \d v.
% \tilde C
% \int_{|v| > R} \left (\frac 1{\gamma(t)} \frac {1}{\left(1+\left|\frac v{\gamma (t)}\right |\right)^{1+\frac 2{p+1}}}  + \frac {1}{(1+|v|)^{1+\frac 2{p+1}}}\right )\, \d v\\
% &\leq 3 \tilde C \int_{|v| >R} \frac 1{{(1+|v|)^{1+\frac 2{p+1}}}}\, \d v.
\end{aligned}
\]
Since $M_p\in L^1$, there is $R_1 = R_1(\eps)>0$ so that  for $R\geq R_1$
\[
2\int_{|v| > R}M_p(v)\, \d v  < \frac \eps 2.
\]
Let us come to the first integral in \eqref{spez}.
For any $v\in\R$ we have
\[
\begin{aligned}
& \left| \frac 1{\gamma(t)} M_p \left( \frac v {\gamma(t)}\right ) - M_p(v)\right | =
\left| \int \left(\widehat M_p(\gamma(t) \xi) - \widehat M_p(\xi)\right ) e^{iv\xi}\, \d \xi\right|\\
&\leq  \int \left|\widehat M_p(\gamma(t) \xi) - \widehat M_p(\xi)\right | \d \xi.
% = \int \left|e^{-\alpha |\xi|^{\frac 2{p+1}}(1-e^{-2t})}- e^{-\alpha |\xi|^{\frac 2{p+1}}}\right |\, \d \xi\\
% & \leq \int e^{-\alpha |\xi|^{\frac 2{p+1}}(1-e^{-2t})}  \left|1- e^{-\alpha |\xi|^{\frac 2{p+1}}e^{-2t}} \right |\, \d \xi
% \leq C \alpha\int e^{-\alpha |\xi|^{\frac 2{p+1}}(1-e^{-2t})}  |\xi|^{\frac 2{p+1}}e^{-2t}\, \d \xi.
\end{aligned}
\]
Since  $\lim_{t\to +\infty} \widehat M_p(\gamma(t) \xi)= \widehat
M_p(\xi)$ for all $\xi$ and for $t\geq t_0$ we have $\frac 12 \leq
\gamma(t) <1$ and so  $\widehat
M_p(\gamma (t)\xi)\leq  \widehat M_p(\xi/2)$, by Lebesgue
theorem there is $t_1= t_1(\eps) >0$ so that for $t\geq \max
(t_0,t_1)$ we have
\[
 \int \left|\widehat M_p(\gamma(t) \xi) - \widehat M_p(\xi)\right | \d \xi \leq \frac \eps {4R}
\]
and so
\[
 \int_{|v| \leq R} \left|\frac 1{\gamma(t)} M_p \left( \frac v {\gamma(t)}\right ) - M_p(v)\right |\, \d v
  \leq \frac \eps 2.
\]
%
% For $t \geq t_1\geq t_0 $ suitably chosen, we have $1-e^{-2t} \geq \frac 12$ and so
% $e^{-\alpha |\xi|^{\frac 2{p+1}}(1-e^{-2t})} \leq e^{-\frac\alpha  2|\xi|^{\frac 2{p+1}}}$ and we get
% \[
%  \left| \frac 1{\gamma(t)} M_p \left( \frac v {\gamma(t)}\right ) - M_p(v)\right |  \leq C \alpha e^{-2t}
%  \int e^{-\frac \alpha 2 |\xi|^{\frac 2{p+1}}}  |\xi|^{\frac 2{p+1}}\, \d \xi \leq \bar C e^{-2t}.
% \]
% So, there is $\bar t \geq \max (t_0, t_1)$ so that for $t\geq \bar t$
% \[
% \int_{|v| \leq R} \left|\frac 1{\gamma(t)} M_p \left( \frac v {\gamma(t)}\right ) - M_p(v)\right |\, \d v \leq
%  2\bar CR e^{-2t} <\frac \eps 2
%  \]
Letting $\bar t = \max (t_0,t_1)$,  the proof of \eqref{lim-FP-generale} is completed.

To prove  \eqref{lim-FP-mom}, we remark that conditions
\eqref{momenti} imply
% \[
% \sup_{\xi \neq 0} \frac{\left|1-\widehat f_0(\xi)\right |}{|\xi|^\lambda} \leq C
%  \]
%  for $C>0$ suitably chosen.
% Therefore,  for $R>0$
% \[
%  \sup_{0<|\xi| \leq R} \frac{\left|1-\widehat f_0(\xi)\right |}{|\xi|^{\frac 2{p+1}}} \leq  C R^{\lambda-\frac 2{p+1}}.
%  \]
% On the other hand,  it is clear that
% \[
%   \sup_{|\xi| > R}  \frac{\left|1-\widehat f_0(\xi)\right |}{|\xi|^{\frac 2{p+1}}} \leq \frac 2{R^{\frac 2{p+1}}}
%  \]
%  and so for $C_1>0$ suitably chosen
\be\label{ip-magg}
\sup_{ \xi \neq 0}\frac{\left|1-\widehat
f_0(\xi)\right |}{|\xi|^{\frac 2{p+1}}} \leq C
\ee
for $C>0$ suitably chosen.
Now, for $\xi \neq 0$ and $t\geq 0$
\[
\begin{aligned}
 &\frac{\left|\widehat f(\xi, t)-\widehat M_p(\xi)\right |}{|\xi|^{\frac 2{p+1}}} =
 \frac{\left|\widehat f_0\left( \xi e^{-(p+1)t}\right ) e^{-\alpha |\xi|^{\frac 2{p+1}}(1-e^{-2t})}-e^{-\alpha|\xi|^{\frac 2{p+1}}} \right |}
 {|\xi|^{\frac 2{p+1}}}\\
 &\leq
 \left|\widehat f_0\left( \xi e^{-(p+1)t}\right )  \right |
 \frac{ \left|e^{-\alpha |\xi|^{\frac 2{p+1}}(1-e^{-2t})}-e^{-\alpha|\xi|^{\frac 2{p+1}}}\right |}
 {|\xi|^{\frac 2{p+1}}} + e^{-\alpha|\xi|^{\frac 2{p+1}}}
 \frac{\left| \widehat f_0\left( \xi e^{-(p+1)t}\right ) -1\right |}
 {|\xi|^{\frac 2{p+1}}}.
 \end{aligned}
\]
%Since $f_0$ and $M_p$ are probability densities, $|\widehat f_0(\xi)|
%\leq 1$ and $M_p(\xi) \leq 1$.
Thanks to condition \eqref{ip-magg} on $f_0$  for $\xi \neq 0$ we
get
\[
\begin{aligned}
 \frac{\left|\widehat f(\xi, t)-\widehat M_p(\xi)\right |}{|\xi|^{\frac 2{p+1}}} &\leq e^{-\alpha |\xi|^{\frac 2{p+1}}(1-e^{-2t})}
 \frac{ 1- e^{-\alpha |\xi|^{\frac 2{p+1}} e^{-2t}}}
 {|\xi|^{\frac 2{p+1}}} + e^{-2t}\frac{1- \widehat f_0\left( \xi e^{-(p+1)t}\right )}
 {|\xi e^{-(p+1)t}|^{\frac 2{p+1}}}\\
 &\leq C (\alpha+1) e^{-2t}  \to 0, \quad t \to +\infty.
 \end{aligned}
\]
\end{proof}

%%%%%%%%%%%%%%%%%%%%%%%%%%%%%%%%%%%%%%%%%%%%%%%%%%%%%%%%%%%%%%%%%%%%%%%
\bigskip \noindent
{\bf Acknowledgment:}  The authors (A.P. and G.T) acknowledge
support by MIUR project ``Optimal mass transportation, geometrical
and functional inequalities with applications''. This paper has been
written within the activities of the National Group of Mathematical
Physics (GNFM) and of the National Group of Mathematical Analysis, Probability and Applications
(GNAMPA) of INDAM.

%%%%%%%%%%%%%%%%%%%%%%%%%%%%%%%%%%%%%%%%%%%%%%%%%%%%%%%%%%%%%%%%%%%%%%%%%%%%%%%%%
%  \bibliographystyle{alpha}
%  \bibliography{biblio-boltz}
% 
%  \end{document}
%%%%%%%%%%%%%%%%%%%%%%%%%%%%%%%%%%%%%%%%%%%%%%%%%%%%%%%%%%%%%%%%%%%%%%%%%%%%%%%%%%

\end{document}